\documentclass[12pt, reqno]{amsart}
\usepackage{amsmath, amsthm, amscd, amsfonts, amssymb, graphicx, color}
\usepackage[english]{babel}
\usepackage[bookmarksnumbered, colorlinks, plainpages]{hyperref}
\hypersetup{colorlinks=true,linkcolor=red, anchorcolor=green, citecolor=cyan, urlcolor=red, filecolor=magenta, pdftoolbar=true}

\newtheorem{theorem}{Theorem}[section]
\newtheorem{lemma}[theorem]{Lemma}

\theoremstyle{definition}
\newtheorem{definition}[theorem]{Definition}

\newtheorem{problem}[theorem]{Problem}

\theoremstyle{remark}

\numberwithin{equation}{section}

\begin{document}

\setcounter{page}{1}

\title[Solvability of Mixed problems for a Space-time Degenerating ...]{Solvability of a Mixed Problem for a Time-Fractional PDE with Time-Space Degenerating Coefficients}

\author[B.H. Toshtemirov]{Bakhodirjon Toshtemirov$^{a}$}

\author[A.O. Mamanazarov]{Azizbek Mamanazarov$^{a}$}

\let\thefootnote\relax
\footnote{ $^{a}$ Department of General Sciences, Alasala Colleges, Saudi Arabia and Kokand University, Uzbekistan \\ Email: bakhodirjon.toshtemirov@alasala.edu.sa \& toshtemirovbh@gmail.com
\\ 
$^{b}$ Department of Mathematical Analysis and Differential Equations, Fergana State University, Uzbekistan\\ Email: mamanazarovaz1992@gmail.com}

\thanks{All authors contributed equally to the manuscript and read and approved the final manuscript.}

\let\thefootnote\relax\footnote{$^{a}$Corresponding author}

\subjclass[2020]{ 35R11; 35K65;  }

\keywords{degenerate equation, mixed problem, hyper-Bessel fractional differential operator, existence and uniqueness of the solution, spectral method.}

\begin{abstract} 

In this paper, we investigate the unique solvability of a mixed boundary value problem for a fractional partial differential equation featuring a degenerate coefficient. By introducing a novel operator and applying the method of separation of variables, we establish the existence of eigenvalues and eigenfunctions for the associated spectral problem and prove that the operator possesses a discrete spectrum. Additionally, we establish the relationship between the given data and the unique solvability of the problem, offering new insights into how degeneracy influences fractional diffusion processes.

\end{abstract} 
\maketitle


\section{Introduction}

Learning fractional partial differential equations has been gaining huge interest from scientists due to its huge application in physics, engineering, and other scientific fields \cite{Uchai}. 
The lack of a single definition of the fractional order differential operators (FDO) and the reason each FDO has its role and application motivates the usage of newly introduced FDOs in different classic problems in understanding their effects and applications.
Another motivation for applying FDO is the generalization of other FDOs. For example, Hilfer FDO \cite{Hil1} generalizes Caputo and Riemann-Liouville FDOs and the properties of this operator confirm the properties of both operators above under some conditions, for further works involving Hilfer or bi-ordinal Hilfer FDOs see the refrences \cite{my5}.

In 1966, I. Dimovski \cite{Dim}, earlier in 1966, introduced the hyper-Bessel
differential operator of higher (integer) order $m\geq 1$:
\begin{equation}\label{eq:1.9}
	B:=t^{\alpha_0}\frac{d}{dt}t^{\alpha_1}\frac{d}{dt}t^{\alpha_2} ... \frac{d}{dt}t^{\alpha_{m-1}}\frac{d}{dt}t^{\alpha_m},    ~ ~ t>0
	\end{equation}
with $\beta=m-(\alpha_0+\alpha_1+...+\alpha_m)>0$.

Later, in \cite{Kir}, as well as, Kiryakova  presented fractional multi-order analogues of the hyper-Bessel operators (\ref{eq:1.9}) identified as particular cases of the generalized fractional derivatives of multi-order $\delta=(\delta_1,...,\delta_m)$
\begin{equation*}
\mathcal{D}=t^{\alpha_0}\left(\frac{d}{dt}\right)^{\delta_1}t^{\alpha_1}\left(\frac{d}{dt}\right)^{\delta_2}t^{\alpha_2} ... \left(\frac{d}{dt}\right)^{\delta_{m-1}}t^{\alpha_{m-1}}\left(\frac{d}{dt}\right)^{\delta_m}t^{\alpha_m}
\end{equation*}

After that Roberto Garra et.al.\cite{Garra2} considered a particular operator that is suitable to generalize the standard process of relaxation considering both memory effects of power-law type and time variability of the characteristic coefficient. They proposed the particular hyper-Bessel operator of order $0<\alpha\leq 1$ identified in terms of Erdelyi-Kober (E-K integral or derivative can be represented as follows
 	 	\begin{equation}\label{eq:1.10}
 	\left(t^{\theta}\frac{d}{dt}\right)^{\alpha}f(t)=\left\{\begin{gathered}
 		(1-\theta)^{\alpha}t^{-\alpha(1-\theta)}I_{1-\theta}^{0, -\alpha} f(t), ~ ~ \text{if} ~ ~ \theta<1,
 		\hfill \cr
 			(\theta-1)^{\alpha}I_{1-\theta}^{-1, -\alpha} t^{(1-\theta)\alpha}f(t), ~ ~ \text{if} ~ ~ \theta>1.   	
 	\end{gathered}\right.
  	\end{equation}

The operator (\ref{eq:1.10})  coincides with the Riemann-Liouville fractional derivative when $\theta=0$.

When $\theta=1$ the explicit form of the operator $\displaystyle\left(t\frac{d}{dt}\right)^{\alpha}$ comes from the theory
of fractional powers of operators such that
\begin{equation}
	\left(t\frac{d}{dt}\right)^{\alpha}f(t)=\delta\left(\mathcal{J}^{1-\alpha}_{t_{0}^{+}}f\right)(t), ~ ~ ~ 0<\alpha<1,
\end{equation} 
where $\displaystyle\delta=t\frac{d}{dt}$ and 
\begin{equation*}
\left(\mathcal{J}^{1-\alpha}_{t_{0}^{+}}f\right)(t)=\frac{1}{\Gamma(1-\alpha)}\int\limits_{t_0}^{t}\left(\ln \frac{x}{u} \right)^{-\alpha} f(u) \frac{du}{u}, ~ ~ t_0\geq 0
\end{equation*}
is the Hadamard fractional integral of order $1-\alpha$.

The regularized Caputo-like counterpart of the hyper-Bessel fractional differential operator has been also introduced and used to model the relaxation process in the same reference. The possibility of the usage of this FDO as two different operators is one of the motivations for us to study the following differential equation.

Let $\Omega$ denote a rectangular domain in the $xOt$ plane, specified as
\[
\Omega = \{(x, t) \mid 0 < x < 1, \quad a < t \leq T < \infty\},
\]
where $T$ is a given positive real constant. In the domain $\Omega$, we shall examine the following equation: 
\begin{equation}\label{eq3.1}
{ }^C\Big(\big(t-a)^\theta \frac{\partial}{\partial t} \Big)^\alpha  u(x,t)-\frac{\partial}{\partial x}\left(x^{\beta}\frac{\partial u(x,t)}{\partial x}\right)=f\left(x,t\right),
\end{equation}
where $\alpha \in (0, 1)$, $\theta<1$, $\beta \in (0,2)$, $\beta \ne 1$, $f(x,t)$ is a given source term. 

${ }^C D_{a+,t}^\alpha$ is the regularized Caputo-like counterpart of the
hyper-Bessel operator with arbitrary starting point $a$. 
The time-fractional derivative introduces a power-law memory effect, capturing relaxation behaviors where past states significantly influence future outcomes.  The key role of the Caputo-like counterpart of the hyper-Bessel fractional-order derivative is its ability to model processes with long-term dependencies, which are often observed in materials and systems with anomalous diffusion (see \cite{hb1}, \cite{my2}).

Moreover, the spatial term in the equation reflects a diffusion process influenced by spatially variable coefficients, as is commonly observed in inhomogeneous media. The $x^\beta$ coefficient means that diffusion is faster or slower depending on the spatial location. 
  Our approach showed that the degree of degeneration 
$\beta$ affects both the formulation and investigation of the problem. This equation may be applied to model anomalous diffusion in porous media, heat conduction in materials with variable conductivity, and biological transport processes, where the diffusion characteristics depend on both time and spatial location. For $0<\beta<1$ the variable-coefficient diffusion equation admits classical solutions, while for $1<\beta<2$ only weak solutions exist, making it particularly useful for modeling phenomena such as heat conduction in heterogeneous porous media where diffusivity varies spatially.  Furthermore, we may refer to some references  \cite{Urinov},  \cite{Maciag} \cite{Khabiri} that similar spatial terms have been used to study degenerate beam vibration equations applying different methods.
   
We will formulate and study an initial boundary value problem for different degrees of degeneration in equation \eqref{eq3.1}. The rest of the paper is organized as follows: In Section 2, we recall auxiliary results about Mittag-Leffler function, the Caputo-like counterpart of the hyper-Bessel fractional differential operator. In Section 3, we explore how the formulation of boundary conditions is influenced by the extent of degeneracy in the diffusion operator. As the degree of degeneration increases, the boundary conditions must be adapted accordingly to ensure the well-posedness of the solution. In Section 4, the main problem formulation is given. Section 5 discusses the spectral problem and we will investigate the order of Fourier coefficients in Section 6. In Section 7, we will analyse the existence of the solution and last section is devoted to studying the uniqueness of the solution.

\section{Preliminaries}
In this section, we present the necessary definitions and properties of the Mittag-Leffler function, the fractional differential operators and some function spaces that will be utilized throughout the paper.

\subsection{Mittag-Leffler Function}
The two-parameter Mittag-Leffler (M-L) function is an entire function and given by
\begin{equation*}\label{e1}
	E_{\alpha, \beta}(z)=\sum_{k=0}^{+\infty}\frac{z^k}{\Gamma(\alpha{k}+\beta)}, \, \, \, \alpha>0, \, \beta\in\mathbb{R}.
\end{equation*}

\begin{lemma}\label{lem: 2.1} (see \cite{Podl}) Let $\alpha<2, \, \beta\in\mathbb{R}$ and $\frac{\pi\alpha}{2}<\mu<\min\{\pi, \pi\alpha\}$. Then the following estimate holds true
$$
|E_{\alpha, \beta}(z)|\leq\frac{M}{1+|z|}, \, \, \, \mu\leq|argz|\leq\pi, \, \, |z|\geq0.
$$\end{lemma}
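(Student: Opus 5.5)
The plan is the standard contour-integral argument from Podlubny's book. The case $\alpha\le 0$ is degenerate, so I restrict to $0<\alpha<2$, which is where the hypothesis on $\mu$ is meaningful. Since $E_{\alpha,\beta}$ is entire, the bound is trivial on any bounded set, say $|z|\le 1$, by continuity. So the real content is the asymptotic statement for large $|z|$ in the sector $\mu\le|\arg z|\le\pi$.

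\textbf{Integral representation.} First I would establish, for $\frac{\pi\alpha}{2}<\varepsilon<\min\{\pi,\pi\alpha\}$ chosen with $\varepsilon<\mu$, the representation
\[
E_{\alpha,\beta}(z)=\frac{1}{2\pi\alpha i}\int_{\gamma(\varepsilon,\varphi)}\frac{e^{\zeta^{1/\alpha}}\zeta^{(1-\beta)/\alpha}}{\zeta-z}\,d\zeta ,
\]
valid for $z$ lying to the left of the Hankel-type contour $\gamma(\varepsilon,\varphi)$. The contour consists of two rays $\arg\zeta=\pm\varphi$, $|\zeta|\ge\varepsilon$, joined by the arc $|\zeta|=\varepsilon$, with $\varphi$ strictly between $\pi\alpha/2$ and $\mu$.

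This comes from Hankel's formula $\frac{1}{\Gamma(s)}=\frac{1}{2\pi i}\int_{Ha}e^{u}u^{-s}\,du$. Substitute into the series and sum the geometric series in $z/\zeta$; for $|z|$ small this is legitimate after the change of variables $u=\zeta^{1/\alpha}$. The identity then extends by analytic continuation to all $z$ in the region $G^{-}$ to the left of the contour. The choice $\varphi>\pi\alpha/2$ is what makes $e^{\zeta^{1/\alpha}}$ decay exponentially on the rays.

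\textbf{Expansion and estimate.} Next, for $z\in G^-$ I would expand
\[
\frac{1}{\zeta-z}=-\frac1z+\frac{\zeta}{z(\zeta-z)}.
\]
The first term yields $-\frac{1}{z}\cdot\frac{1}{2\pi\alpha i}\int_\gamma e^{\zeta^{1/\alpha}}\zeta^{(1-\beta)/\alpha}\,d\zeta=-\frac{1}{z\Gamma(\beta-\alpha)}$ by Hankel's formula. It is therefore bounded by $C/|z|$; when $\beta-\alpha$ is a nonpositive integer this term vanishes, which is harmless.

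The remainder is $\frac{1}{2\pi\alpha i z}\int_\gamma \frac{e^{\zeta^{1/\alpha}}\zeta^{(1-\beta)/\alpha+1}}{\zeta-z}\,d\zeta$. For $\mu\le|\arg z|\le\pi$ and $|z|$ large, the distance from $z$ to the contour satisfies $|\zeta-z|\ge |z|\sin(\mu-\varphi)$. The integrand's numerator is absolutely integrable on $\gamma$ thanks to the exponential decay. Hence this term is $O(|z|^{-2})$.

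Combining the two terms gives $|E_{\alpha,\beta}(z)|\le C_1/|z|$ for $|z|\ge 1$. Together with boundedness on $|z|\le1$, this yields $|E_{\alpha,\beta}(z)|\le M/(1+|z|)$ on the whole sector.

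\textbf{Main obstacle.} I expect the delicate step to be the justification of the integral representation and its continuation. Specifically, one must check convergence of the contour integral, and verify that the whole sector $\mu\le|\arg z|\le\pi$ (for $|z|>\varepsilon$) lies in $G^-$ when $\varphi<\mu$. The case $1\le\alpha<2$ needs care because the rays $\arg\zeta=\pm\varphi$ approach the negative axis and the branch of $\zeta^{1/\alpha}$ must be handled consistently. I would cite Podlubny's Theorem 1.6 and its proof for these details, since the paper itself attributes the lemma to \cite{Podl}.
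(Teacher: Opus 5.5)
Your argument is correct and matches the proof the paper relies on: the paper gives no proof of its own and cites Podlubny, where this bound (Theorem 1.6) follows from the contour representation over $\gamma(\varepsilon,\varphi)$ with $\pi\alpha/2<\varphi<\mu$ together with the one-term asymptotic expansion (Theorem 1.4 with $p=1$), exactly as you do it. Two fixes are cosmetic: in your first inequality chain the quantity between $\pi\alpha/2$ and $\min\{\pi,\pi\alpha\}$ should be the angle $\varphi$, not the arc radius $\varepsilon$; and the distance bound $|\zeta-z|\ge|z|\sin(\mu-\varphi)$ holds for $|z|\ge 1$ only if you take $\varepsilon\le 1-\sin(\mu-\varphi)$, otherwise replace $1$ by a large $R$ and let continuity cover $|z|\le R$.
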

{Here $M$ is a positive constant.}
 
\begin{lemma}\label{Lem: 1.1.4}  \cite{Kil} If $\alpha>{0}$  and $\beta\in\mathbb{C}$, then the following recurrence formula holds:
$$
E_{\alpha, \beta}(z)=\frac{1}{\Gamma(\beta)}+zE_{\alpha, \alpha+\beta}(z).
$$
\end{lemma}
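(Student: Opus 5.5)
The plan is to prove the recurrence directly from the series definition of the two-parameter Mittag-Leffler function given at the start of this subsection. Since $\alpha>0$, the series $\sum_{k\ge0} z^k/\Gamma(\alpha k+\beta)$ converges absolutely for every $z\in\mathbb{C}$. This follows from the ratio test together with Stirling's formula, because $\Gamma(\alpha k+\alpha+\beta)/\Gamma(\alpha k+\beta)\to\infty$ as $k\to\infty$. We use the convention $1/\Gamma(-n)=0$ for $n\in\mathbb{N}_0$, so the terms are well defined for all complex $\beta$. Absolute convergence lets us split off terms and reindex freely.

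First, I separate the $k=0$ term:
$$
E_{\alpha,\beta}(z)=\frac{1}{\Gamma(\beta)}+\sum_{k=1}^{\infty}\frac{z^k}{\Gamma(\alpha k+\beta)}.
$$
Next, in the remaining sum I substitute $k=j+1$ with $j\ge0$. The general term becomes
$$
\frac{z^{j+1}}{\Gamma(\alpha j+\alpha+\beta)}=z\cdot\frac{z^{j}}{\Gamma(\alpha j+(\alpha+\beta))}.
$$
Factoring $z$ out of the convergent series gives $z\sum_{j\ge0} z^j/\Gamma(\alpha j+\alpha+\beta)=zE_{\alpha,\alpha+\beta}(z)$, again by the series definition. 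Combining this with the first step yields the stated identity.

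There is no real obstacle. The only point needing care is justifying that the manipulations are legitimate, namely that both series are entire, which the ratio-test remark covers. A secondary point is the interpretation of $1/\Gamma(\beta)$ when $\beta$ is a non-positive integer, which the reciprocal-Gamma convention handles. The identity is then an equality of entire functions, valid for all $z\in\mathbb{C}$.
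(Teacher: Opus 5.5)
Your proof is correct. Splitting off the $k=0$ term and reindexing $k=j+1$ in the defining series is the standard derivation, and your remarks on entireness for $\alpha>0$ and the convention $1/\Gamma(-n)=0$ make it rigorous for complex $\beta$. The paper gives no proof of its own and simply quotes the identity from Kilbas et al., so there is no competing argument to compare against.
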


\subsection{Caputo-like counterpart of the hyper-Bessel FDO with arbitrary starting point}

 \label{Def:1.3.17} A regularized Caputo-like counterpart of the operator (\ref{eq:1.10}) is defined for $\theta<1$  in terms of E-K fractional order operator  such that 
	\begin{equation}\label{1.12}
^C\left(t^{\theta}\frac{d}{dt}\right)^{\alpha}f(t)=(1-\theta)^{\alpha}t^{-\alpha(1-\theta)}I_{1-\theta}^{0, -\alpha} [f(t)-f(0)]
	\end{equation}
or in terms of the hyper-Bessel differential operator
\begin{equation}
	^C\left(t^{\theta}\frac{d}{dt}\right)^{\alpha}f(t)=\left(t^{\theta}\frac{d}{dt}\right)^{\alpha}f(t)-\frac{f(0)t^{-\alpha(1-\theta)}}{(1-\theta)^{-\alpha}\Gamma(1-\alpha)},
\end{equation}
where $f(0)$ is the initial condition.

The regularized Caputo-like counterpart of the
hyper-Bessel operator can be also defined with arbitrary starting point $a$, see \cite{my1}.
\begin{definition}\label{Def:3.1.1}
	Regularized Caputo-like counterpart of the hyper-Bessel fractional differential operator for  $\theta<1$,  $0<\alpha\leq1$ and $t>a\geq{0}$ is defined 
	\begin{equation}\label{3.3}
		^{C}\left((t^{\theta}-a^\theta)\frac{d}{dt}\right)^\alpha{f}(t)=\Big((t^{\theta}-a^{\theta})\frac{d}{dt}\Big)^\alpha{f}(t)-\frac{f(a)\Big(t^{(1-\theta)}-a^{(1-\theta)}\Big)^{-\alpha}}{(1-\theta)^{-\alpha}\Gamma(1-\alpha)},
	\end{equation}
	where
	$$
	\left((t^\theta-a^\theta)\frac{d}{dt}\right)^\alpha{f}(t)=\left\{ \begin{gathered}
		(1-\theta)^\alpha t^{-(1-\theta)\alpha}I_{1-\theta, a+}^{0, -\alpha} f(t) \, \, \,  \textrm{if}  \, \,  \,  \theta<1, \hfill \cr
		(\theta-1)^\alpha t^{-(1-\theta)\alpha}I_{1-\theta, a+}^{-1, -\alpha} f(t)  \, \, \,  \textrm{if} \, \,  \,  \theta>1,  \hfill \cr
	\end{gathered}  \right.
	$$
	is a hyper-Bessel fractional differential operator.
	\end{definition}

We note that $I_{\beta}^{\gamma, -\alpha}:=D_{\beta}^{\gamma-\alpha, \alpha}$  is the interpretation of Erdelyi-Kober (E-K) integral for negative order such that
\begin{equation*}
I_{\beta}^{\gamma, -\alpha}:=(\gamma-\alpha+1)I_{\beta}^{\gamma, 1-\alpha}f(t)+\frac{1}{\beta}I_{\beta}^{\gamma, 1-\alpha}\left(t\frac{d}{dt}f(t)\right).
\end{equation*}

	The Erdelyi-Kober (E-K) fractional integral of a function  $f(t)\in{C_\mu}$  $\mu \geq -\beta(\gamma+1) $ with arbitrary parameters $\delta>0, \gamma\in{\mathbb{R}}$ and $\beta>0$ is defined as (\cite{Kir})
	$$
	I^{\gamma, \delta}_{\beta; a+}f(t)=\frac{t^{-\beta(\gamma+\delta)}}{\Gamma(\delta)}\int\limits_{a}^{t}(t^\beta-\tau^\beta)^{\delta-1}\tau^{\beta\gamma}f(\tau)d(\tau^\beta),
	$$
	which can be reduced up to a weight to  $I^{q}_{a+}f(t)$ (Riemann-Liouville fractional integral) at $\gamma=0$ and $\beta=1$, and Erdelyi-Kober fractional derivative of $f(t)\in{C}_{\mu}^{(n)}$ for $n-1<\delta\leq{n}, n\in\mathbb{N}$ is defined by
	$$
	D_{\beta, a+}^{\gamma, \delta}f(t)=\prod_{j=1}^{n}\left(\gamma+j+\frac{t}{\beta}\frac{d}{dt}\right)\big(I_{\beta, a+}^{\gamma+\delta, n-\delta} f(t)\big),
	$$
where $C_{\mu}^{(n)}$  is the  weighted space of continuous functions defined as follows
\begin{equation}\label{C1.2.1}
		C_{\mu}^{(n)}:=\left\{f(t)=t^pf_1(t), ~ ~ f_1(t)\in C^{(n)}[0, \infty)\right\}, ~ ~   C_{\mu} :=C_{\mu}^{(0)} ~  \text{with} ~ \nu\in\mathbb{R}
	\end{equation}
	if there exists $p>\mu$ for fixed $\mu\geq -1$.

\begin{theorem}\label{theo2.4}  If $f(t)\in C_{\mu}[a, \infty)$, then the non-homogeneous fractional differential equation for $~\alpha\in(0, 1), ~ \theta<1$
	\begin{equation}\label{equat2.4}
		^C\left(\big(t^{\theta}-a^\theta\big)\frac{d}{dt}\right)^{\alpha}u(t)+\lambda u(t)=f(t), ~ 
	\end{equation} 
	satisfying the initial condition  $u(a^+)=u_a$, has a unique solution represented by
	\begin{equation}\label{eq2.6}
		\begin{gathered}
		u(t)=u_a E_{\alpha, 1}\Big(\lambda^*\big(t^p-a^p\big)^\alpha\Big)+B(t),
		\end{gathered}
	\end{equation}
 where $p=1-\theta$ and
	$$
	\begin{array}{l}
		\displaystyle{B(t)=\frac{1}{p^\alpha\Gamma(\alpha)}\int_{a}^{t}\big(t^p-\tau^p\big)^{\alpha-1}f(\tau)d(\tau^p)}\\
		\\
		\displaystyle{+\frac{\lambda^*}{p^\alpha}\int\limits_{0}^{t}\left(t^{p}-x^{p}\right)^{2\alpha-1}E_{\alpha, 2\alpha}\left[\lambda^*\left(t^{p}-x^{p}\right)^\alpha\right]f(x)d(x^p),}
	\end{array}
	$$
	where $p=1-\theta$ and $\lambda^*=-\frac{\lambda}{p^{\alpha}}$.
	\end{theorem}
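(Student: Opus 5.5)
The plan is to reduce the equation to a Caputo equation in the variable $s=t^p-a^p$, with $p=1-\theta$. Under this change of variables the Erd\'elyi--Kober structure of the hyper-Bessel operator turns into the classical Caputo derivative of order $\alpha$. The resulting constant-coefficient Caputo problem can then be solved by the known Mittag-Leffler formula and transformed back.

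\textbf{Step 1: change of variables.} Put $s=t^p-a^p$ and $v(s)=u\big((s+a^p)^{1/p}\big)$. Unwinding Definition~\ref{Def:3.1.1} with $\gamma=0$ and $\beta=p$, the integral $I^{0,1-\alpha}_{p,a+}$ becomes a Riemann--Liouville integral of order $1-\alpha$ in the variable $\tau^p$. The factor $\big(t\frac{d}{dt}\big)/p$, applied after the appropriate power of $t$, becomes $\frac{d}{ds}$ up to the weight. Subtracting the regularizing term in \eqref{3.3} then gives
$$
{}^C\Big(\big(t^{\theta}-a^\theta\big)\tfrac{d}{dt}\Big)^{\alpha}u(t)=p^\alpha\,\big({}^C D^\alpha_{0+}v\big)(s).
$$
I expect this step to be the main obstacle. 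The prefactor $t^{-(1-\theta)\alpha}$ and the weights in the E-K integral must be matched exactly against $(t^p-\tau^p)^{-\alpha}$, and the paper's normalization with $(1-\theta)^\alpha$ must produce exactly the constant $p^\alpha$. I would check this first on the power functions $u=(t^p-a^p)^k$: the operator should map them to $p^\alpha\frac{\Gamma(k+1)}{\Gamma(k+1-\alpha)}(t^p-a^p)^{k-\alpha}$, in agreement with the known action on powers from \cite{my1}.

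\textbf{Step 2: solve the Caputo problem.} Equation \eqref{equat2.4} becomes
$$
{}^C D^\alpha_{0+}v(s)-\lambda^*v(s)=p^{-\alpha}g(s),\qquad \lambda^*=-\lambda/p^{\alpha},\quad g(s)=f\big((s+a^p)^{1/p}\big),
$$
with $v(0)=u_a$. Its unique solution in $C_\mu$ is standard (\cite{Kil}):
$$
v(s)=u_aE_{\alpha,1}(\lambda^* s^\alpha)+p^{-\alpha}\int_0^s(s-\sigma)^{\alpha-1}E_{\alpha,\alpha}\big(\lambda^*(s-\sigma)^\alpha\big)g(\sigma)\,d\sigma .
$$
Uniqueness follows because the difference of two solutions satisfies the homogeneous Volterra equation $w=\lambda^*I^\alpha w$. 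That equation has only the trivial solution, which one sees by iterating the integral or by a Gronwall-type argument.

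\textbf{Step 3: return to $t$.} Apply Lemma~\ref{Lem: 1.1.4} in the form
$$
(s-\sigma)^{\alpha-1}E_{\alpha,\alpha}(z)=\frac{(s-\sigma)^{\alpha-1}}{\Gamma(\alpha)}+\lambda^*(s-\sigma)^{2\alpha-1}E_{\alpha,2\alpha}(z).
$$
This splits the kernel into the two terms of $B(t)$. Substituting $s-\sigma=t^p-\tau^p$ and $d\sigma=d(\tau^p)$ gives exactly \eqref{eq2.6}. The lower limit of the second integral should read $a$ rather than $0$.

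Finally, I would verify that $f\in C_\mu$ makes all the integrals converge and that $u(a^+)=u_a$. The latter holds because $B(t)=O\big((t^p-a^p)^{\alpha+\mu'}\big)\to0$ as $t\to a^+$, which follows from the boundedness of the Mittag-Leffler function on bounded sets.
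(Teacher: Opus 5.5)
Your proof is correct, but it takes a different route from the paper's. The paper never changes variables. It stays in $t$ and verifies \eqref{eq2.6} by direct substitution:
\begin{itemize}
\item It takes the known hyper-Bessel derivative of $u_aE_{\alpha,1}(\lambda^*(t^p-a^p)^\alpha)$ and rewrites it with Lemma~\ref{Lem: 1.1.4}.
\item For $B(t)$, it writes $B=f^*+\lambda^*\int_a^t(\cdot)^{\alpha-1}E_{\alpha,\alpha}(\cdot)f^*\,d(\tau^p)$, where $f^*$ is the Erd\'elyi--Kober integral of $f$.
\item It then applies $D^{-\alpha,\alpha}_{p,a+}$ explicitly, swaps the order of integration, and evaluates the inner Beta-type integral to obtain $f-\lambda B$.
\end{itemize}

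Your Step~1, which you flagged as the main obstacle, goes through. With $\gamma=0$ and $n=1$ one gets $\bigl(1-\alpha+\frac{t}{p}\frac{d}{dt}\bigr)\bigl[t^{-p(1-\alpha)}G\bigr]=t^{p\alpha}\frac{dG}{d(t^p)}$. So the prefactor $t^{-p\alpha}$ cancels and $(1-\theta)^\alpha$ supplies exactly $p^\alpha$. The regularizing term in \eqref{3.3} is $p^\alpha v(0)s^{-\alpha}/\Gamma(1-\alpha)$, so the operator is exactly $p^\alpha\,{}^CD^\alpha_{0+}$ in the variable $s$.

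Your conjugation buys several things:
\begin{itemize}
\item It is much shorter.
\item It imports uniqueness from the classical Caputo theory; the paper's proof only checks that \eqref{eq2.6} is a solution and never argues uniqueness.
\item It yields Lemma~\ref{lemmacha} for free as the intermediate $E_{\alpha,\alpha}$ form.
\item It shows why the lower limit of the second integral must be $a$, which is what the paper itself uses in its proof and in \eqref{eq7.4}.
\end{itemize}
The paper's computation, by contrast, needs no external Caputo result and stays inside the E--K calculus, at the cost of length.

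One small caveat on your last step. The bound $B(t)=O(s^{\alpha+\mu'})\to0$ requires $\alpha+\mu'>0$. For $a>0$ this is automatic, since $f$ is then continuous at $a$. If $a=0$, so that the weight of $C_\mu$ sits at the starting point, you need its exponent to exceed $-\alpha$. The statement does not impose this, and the paper simply calls the initial condition ``obvious''.
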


Note that a similar theorem was proved in \cite{fatm} for zero starting point.

\begin{proof} 
    It is obvious that the solution (\ref{eq2.6}) satisfies the initial condition.

    Below,  we explain the derivation of the  $\displaystyle^{C}\left((t^{\theta}-a^\theta)\frac{\partial}{\partial{t}}\right)^\alpha{u}(t)$ in (\ref{eq2.6}). By using  relation (\ref{3.3}) we get:
\begin{multline*}
^{C}\left((t^{\theta}-a^\theta)\frac{\partial}{\partial{t}}\right)^\alpha{u}(t)=\left(t^\theta\frac{\partial}{\partial{t}}\right)^\alpha\left[u_0 E_{\alpha, 1}\Big(\lambda^*\big(t^p-a^p\big)^\alpha\Big)+B(t)\right]-\\
-\frac{u_0 (t^p-a^p)^\alpha}{p^{-\alpha}\Gamma(1-\alpha)}.~ ~ ~ ~ ~ ~ 
\end{multline*}

The hyper-Bessel derivative of the Mittag-Leffler function is
\begin{equation*}
\left((t^\theta-a^\theta)\frac{\partial}{\partial{t}}\right)^\alpha u_0 E_{\alpha,1}\left(-\frac{\lambda}{p^\alpha}(t^p-a^p)^\alpha\right)=
u_0 p^\alpha(t^p-a^p)^{-\alpha}E_{\alpha, 1-\alpha}\big[-\lambda(t^p-a^p)^\alpha\big].
\end{equation*}

By means of Lemma \ref{Lem: 1.1.4}, the last expression can be also written as 
\begin{multline*}
\Big((t^\theta-a^\theta)\frac{\partial}{\partial{t}}\Big)^\alpha u_0 E_{\alpha,1}\Big[-\frac{\lambda}{p^\alpha}(t^p-a^p)^\alpha\Big]=
\frac{u_0 p^\alpha(t^p-a^p)^{-\alpha}}{\Gamma(1-\alpha)}-u_0 \lambda E_{\alpha, 1}\big[-\frac{\lambda}{p^\alpha}(t^p-a^p)^\alpha\big].
\end{multline*}

Then evaluating $\Big((t^\theta-a^\theta)\frac{\partial}{\partial{t}}\Big)^\alpha{B}(t)$ gives that
\begin{multline*}
\Big((t^\theta-a^\theta)\frac{\partial}{\partial{t}}\Big)^\alpha{B}(t)=\Big((t^\theta-a^\theta)\frac{\partial}{\partial{t}}\Big)^\alpha\Big(f^*(t)+\lambda^*\int_{a}^{t}(t^p-a^p)^{\alpha-1}E_{\alpha, \alpha}\big[\lambda^*(t^p-a^p)\big]f^*(\tau)d(\tau^p)\Big)=\\
p^\alpha{t}^{-p\alpha}D_{p, a+}^{-\alpha, \alpha}\Big(\frac{1}{p^\alpha}I_{p, a+}^{-\alpha, \alpha}t^{p\alpha}{f}(t)+\lambda^*\int_{a}^{t}(t^p-a^p)^{\alpha-1}E_{\alpha, \alpha}\big[\lambda^*(t^p-a^p)\big]f^*(\tau)d(\tau^p)\Big)=\\
f(t)+p^{\alpha}t^{-p\alpha}D_{p, a+}^{-\alpha, \alpha}\Big(\lambda^*\int_{a}^{t}(t^p-a^p)^{\alpha-1}E_{\alpha, \alpha}\big[\lambda^*(t^p-a^p)\big]f^*(\tau)d(\tau^p)\Big),
\end{multline*}
where $\lambda^*=-\frac{\lambda}{p^\alpha}$ and $$f^*(t)=\frac{1}{p^\alpha\Gamma(\alpha)}\int_{a}^{t}(t^p-\tau^p)^{\alpha-1}f(\tau)d(\tau^p).$$

The second term in the last expression can be simplified using the Erd'elyi-Kober fractional derivative for $n=1$,
\begin{multline*}
-\lambda t^{-p\alpha}\left(1-\alpha+\frac{t}{p}\frac{d}{dt}\right)\frac{t^{-p(1-\alpha)}}{\Gamma(1-\alpha)}
\int_{a}^{t}(t^p-\tau^p)^{-\alpha}d(\tau^p)\times\\
\int_{a}^{\tau}(\tau^p-s^p)^{\alpha-1}E_{\alpha, \alpha}\big[\lambda^*(\tau^p-s^p)^\alpha\big]f^*(s)d(s^p)=
\end{multline*}
\begin{multline*}
-\lambda t^{-p\alpha}\left(1-\alpha+\frac{t}{p}\frac{d}{dt}\right)\frac{t^{-p(1-\alpha)}}{\Gamma(1-\alpha)}\int_{a}^{t}f^*(s)d(s^p)\times\\
\int_{s}^{t}(t^p-\tau^p)^{-\alpha}(\tau^p-s^p)^{\alpha-1}E_{\alpha, \alpha}\big[\lambda^*(\tau^p-s^p)^\alpha\big]d(\tau^p)=
\end{multline*}
$$
-\lambda t^{-p\alpha}\left(1-\alpha+\frac{t}{p}\frac{d}{dt}\right)t^{-p(1-\alpha)}\int_{a}^{t}E_{\alpha, 1}\left[\lambda^*(t^p-s^p)^{\alpha}\right]f^*(s)d(s^p)=
$$
$$
-\lambda (1-\alpha)t^{-p}\int_{a}^{t}E_{\alpha, 1}\left[\lambda^*(t^p-s^p)^{\alpha}\right]f^*(s)d(s^p)-
$$
$$
-\frac{\lambda t^{-p\alpha+1}}{p}\frac{d}{dt}\left(t^{-p(1-\alpha)}\int_{a}^{t}E_{\alpha, 1}\left[\lambda^*(t^p-s^p)^{\alpha}\right]f^*(s)d(s^p)\right)=
$$
$$
=-\lambda (1-\alpha)t^{-p}\int_{a}^{t}E_{\alpha, 1}\left[\lambda^*(t^p-s^p)^{\alpha}\right]f^*(s)d(s^p)+
$$
$$
+\lambda (1-\alpha)t^{-p}\int_{a}^{t}E_{\alpha, 1}\left[\lambda^*(t^p-\tau^p)^{\alpha}\right]f^*(\tau)d(\tau^p)-\lambda_kf^*(t)-
$$
$$
-\lambda t^{1-p}\int_{a}^{t}\lambda^*(t^p-\tau^p)^{\alpha-1}E_{\alpha, \alpha}\left[\lambda^*(t^p-\tau^p)^{\alpha}\right]f^*(\tau)d(\tau^p)=
$$
$$
=-\lambda_k\left(f^*(t)+\lambda^*\int_{a}^{t}(t^p-a^p)^{\alpha-1}E_{\alpha, \alpha}\big[\lambda^*(t^p-a^p)\big]f^*(\tau)d(\tau^p)\right)=-\lambda B(t).
$$
Hence, we get
\begin{equation*}
^{C}\left((t^{\theta}-a^\theta)\frac{\partial}{\partial{t}}\right)^\alpha{u}(t)=
-\lambda \left[u_0 E_{\alpha, 1}\left(\frac{(k\pi)^2}{p^\alpha}(t^p-a^p)\right)+B(t)\right]+f(t).
\end{equation*}
This proves that the solution (\ref{eq2.6}) satisfies the equation
$$
^C\Big((t^\theta-a^\theta)\frac{\partial}{\partial{t}}\Big)^{\alpha}u(t)+\lambda u(t)=f(t).
$$

\end{proof}

\begin{lemma}\label{lemmacha}
     Consider the problem for (\ref{equat2.4}) with $u(a^+)=u_a$ condition. Another explicit form of the solution can also be written as follows 
     $$
u(t)=u_a E_{\alpha, 1}\left(\lambda^* (t^p-a^p)^{\alpha}\right)+\frac{1}{p^\alpha} \int_a^t\left(t^p-s^p\right)^{\alpha-1} E_{\alpha, \alpha}\left(\lambda^*\left(t^\rho-s^p\right)^\alpha\right) f(s) d\left(s^\rho\right).
$$
\end{lemma}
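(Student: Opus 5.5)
Since Theorem \ref{theo2.4} already gives existence and uniqueness, with the homogeneous part $u_aE_{\alpha,1}(\lambda^*(t^p-a^p)^\alpha)$ identical in both formulas, it suffices to show that $B(t)$ equals
$$
G(t):=\frac{1}{p^\alpha}\int_a^t (t^p-s^p)^{\alpha-1}E_{\alpha,\alpha}\big(\lambda^*(t^p-s^p)^\alpha\big)f(s)\,d(s^p).
$$
The natural first step is the substitution $\xi=t^p$, $\eta=s^p$. It turns every kernel into a convolution kernel in the variable $\xi$ on $[a^p,\infty)$. Writing $g(\eta)=f(\eta^{1/p})$, we get
$$
B=\frac{1}{p^\alpha}\Big(\frac{\xi^{\alpha-1}}{\Gamma(\alpha)}*g+\lambda^*\,\xi^{2\alpha-1}E_{\alpha,2\alpha}(\lambda^*\xi^\alpha)*g\Big),
$$
with the convolution taken from $a^p$. (The lower limit $0$ in the second integral of $B$ in Theorem \ref{theo2.4} is read as $a$, consistent with the proof, where $f$ is extended by zero below $a$.)

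The key step is the kernel identity
$$
\frac{\xi^{\alpha-1}}{\Gamma(\alpha)}+\lambda^*\xi^{2\alpha-1}E_{\alpha,2\alpha}(\lambda^*\xi^\alpha)=\xi^{\alpha-1}E_{\alpha,\alpha}(\lambda^*\xi^\alpha).
$$
This is exactly Lemma \ref{Lem: 1.1.4} with $\beta=\alpha$ and $z=\lambda^*\xi^\alpha$, multiplied by $\xi^{\alpha-1}$. It can also be checked termwise from the series: the term $k$ of $E_{\alpha,2\alpha}$ shifts to the term $k+1$ of $E_{\alpha,\alpha}$. Substituting this identity into $B$ gives $B=G$ directly, and that proves the lemma.

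There is a small mismatch to resolve. In the proof of Theorem \ref{theo2.4}, $B$ is written instead as $f^*+\lambda^*\int(\cdot)^{\alpha-1}E_{\alpha,\alpha}\,f^*$, with $f^*$ the Erdélyi--Kober type integral of $f$. To show this form also agrees, I would use the Laplace transform in $\xi$ after shifting $\xi\mapsto\xi-a^p$. Alternatively, the Beta-integral interchange of the order of integration used there turns $(\cdot)^{\alpha-1}E_{\alpha,\alpha}*(\cdot)^{\alpha-1}/\Gamma(\alpha)$ into $(\cdot)^{2\alpha-1}E_{\alpha,2\alpha}$. Each representation then reduces to the transform $\frac{1}{p^\alpha}\,\frac{1}{s^\alpha-\lambda^*}\,\hat g(s)$.

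The main obstacle is justifying the termwise manipulations and Fubini. Since $f\in C_\mu$, the integrands are weakly singular only at $s=t$, with exponent $\alpha-1>-1$. The Mittag-Leffler factors are bounded on compact sets, since the function is entire. Hence all the series and integral interchanges converge absolutely, uniformly on compact $t$-intervals.
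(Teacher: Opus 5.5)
Your proof is correct. The paper gives no argument for this lemma; it only points to \cite{Zang}, where the representation is obtained for zero initial data. That leaves the reader to add the homogeneous term $u_aE_{\alpha,1}(\lambda^*(t^p-a^p)^\alpha)$ by superposition and to move the starting point to $a$. You instead derive the lemma inside the paper from Theorem~\ref{theo2.4}. First you correct the lower limit $0$ to $a$ in the second integral of $B$, which is how $B_k$ is written in Section~7. Then Lemma~\ref{Lem: 1.1.4} with $\beta=\alpha$ and $z=\lambda^*(t^p-s^p)^\alpha$, multiplied by $(t^p-s^p)^{\alpha-1}$, merges the two kernels of $B$ pointwise into $(t^p-s^p)^{\alpha-1}E_{\alpha,\alpha}(\lambda^*(t^p-s^p)^\alpha)$. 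So the two formulas are literally the same function, and existence and uniqueness are inherited rather than re-proved. That step uses only linearity of the integral. The Laplace/Fubini material is needed only for your optional check against the form $f^*+\lambda^*\int(\cdot)^{\alpha-1}E_{\alpha,\alpha}f^*$ used inside the proof of Theorem~\ref{theo2.4}. That check is also right, since $\xi^{\alpha-1}E_{\alpha,\alpha}(\lambda^*\xi^\alpha)$ convolved with $\xi^{\alpha-1}/\Gamma(\alpha)$ gives $\xi^{2\alpha-1}E_{\alpha,2\alpha}(\lambda^*\xi^\alpha)$. Your remark that the integrands are singular only at $s=t$ overlooks a possible singularity of $f\in C_\mu$ at $s=0$ when $a=0$, but nothing in the main step depends on it. The price of your route is that it is only as sound as Theorem~\ref{theo2.4}, whose written proof contains several misprints. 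The cited route avoids that dependence. So does the substitution $\xi=t^p/p$: it gives $t^\theta\frac{d}{dt}=\frac{d}{d\xi}$ and turns the equation into a classical Caputo equation started at $a^p/p$, whose standard variation-of-constants formula is exactly the asserted representation.
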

In \cite{Zang}, a similar theorem was proven for the same solution with a zero initial condition, providing a framework for the current analysis.

\subsection{Relevant function spaces}
 
 Let $t \in[0, T]$ and that, for every $t$, or at least for a.e. $t$, the function $u(\cdot, t)$ belongs to a separable Hilbert space $V\left(\right.$ e.g. $L^2(\Omega)$ or $\left.H^1(\Omega)\right)$.
Then, one can consider $u$ as a function of the real variable $t$ with values into $V$ :
$$
u:[0, T] \rightarrow V.
$$

The set $C([0, T]; V)$ of  continuous functions $u:[0, T] \rightarrow V$, equipped with the norm
\begin{equation}\label{2.15}
\|u(\cdot,t)\|_{C([0, T] ; V)}=\max _{0 \leq t \leq T}\|u(\cdot,t)\|_V,    
\end{equation}
forms a Banach space. 

The symbol $C^1([0, T]; V)$ denotes the Banach space of functions whose derivative exists and belongs to $C([0, T]; V)$, endowed with the norm
\begin{equation}\label{2.16}
  \|u(\cdot,t)\|_{C^1([0, T] ; V)}=\|u(\cdot,t)\|_{C([0, T] ; V)}+\|{u_t(\cdot,t)}\|_{C([0, T] ; V)}.  
\end{equation}

\section{Dependence of the boundary conditions on the degree of degeneration}

In this section, we study the dependence of the boundary conditions on the degree of degeneration. For this aim, let us consider 
the following equation 
\begin{equation}\label{eq2.1}
    Av\equiv -\frac{d}{dx}\left(x^{\beta} \frac{dv}{dx}\right)=f(x),
\end{equation}
where $f(x)\in L^2(0,1)$ is given function, $\beta \in \mathbb{R}$ such that $0<\beta<2$ and $\beta\ne 1$.

The domain $D(A)$ of the operator $A$ consists of functions satisfying the following requirements:
\begin{equation*}
\begin{aligned}
a) & \quad  v(x) \in C[0,1], \text{the function} \quad x^{\beta}v^{\prime}(x) \quad \text{has a continuous derivative on} \quad [0,1] \\ & \quad \text{and}  \quad \frac{d}{dx}(x^{\beta} v^{\prime}(x)) \in AC[0,1];\\
b) & \quad Av\in L^2(0,1);\\
c) & \quad \text{satisfies the boundary condition} \\
\end{aligned}
\end{equation*}
    \begin{equation}\label{eq2.2}
       { v(1)=0}.
    \end{equation}

Let us determine the conditions that need to be imposed at $x=0$ for a unique solution to the given problem $\{\eqref{eq2.1},\eqref{eq2.2}\}$
 to exist.

For this aim, let us consider the following  scalar product
$$
\left(-\frac{d}{d x}\left(x^{\beta}\frac{dv}{d x}\right), v\right)=-\int_0^1 \frac{d}{d x}\left(x^{\beta}\frac{dv}{dx}\right) v dx.
$$

Hence, applying the rule of integration by parts, we obtain
\begin{equation}\label{eq2.3}\left(-\frac{d}{dx}\left(x^{\beta}\frac{d v}{d x}\right), v \right)=-\left.v \left(x^{\beta}\frac{d v}{d x}\right)\right|_0 ^1+\int_0^1 x^{\beta}\left(\frac{d v}{dx}\right)^2 dx.    
\end{equation}

We will define the conditions under which the first term on the right-hand side of \eqref{eq2.3} vanishes. 

Let $1<\beta<2$. We will show that for these values of $\beta$, the following equality
\begin{equation}\label{eq2.5}
    \lim _{x \rightarrow 0}x^\beta v^{\prime}(x)=0 
\end{equation}
is valid. 

We assume that converse, i.e. there exists some non-zero constant $b$ such that 
$$
\lim _{x \rightarrow 0} x^\beta v^{\prime}(x)=b.
$$

For instance, let $b>0$. Then for $0<p<b$ and sufficiently small $x$ the following inequality is valid: 
$$v^{\prime}(x)>\frac{p}{x^\beta}.$$

Due to the last inequality the following integral
$$\int_0^x v^{\prime}(x) d x$$
is divergent, contradicting the continuity of $v(x)$. From this contradiction, 
we get the proof of equality \eqref{eq2.5}.

Thus, we proved that if $0<\beta<2$  we do not need any conditions at $x=0$ for vanishing the first term on the right-hand side of  \eqref{eq2.3}.

Now, we consider the case $0 < \beta <1$.  In this case, we have 
$$
\lim _{x \rightarrow 0}x^{\beta} v^{\prime} \neq 0.
$$

Thus, we proved that for any function $v\in D(A)$, in case $0<\beta<1$, the condition $v(0)=0$ is
a necessary condition for vanishing the first term on the right-hand side of \eqref{eq2.3} and in case $\beta>1$, we do not need any conditions at $x=0$.  

\section{Main Problem Outline and Exploration}

For equation \eqref{eq3.1}, we study the following boundary value problem: 

\begin{problem}\label{problem4.1}
 Find a solution in the domain $\Omega$ of the equation \eqref{eq3.1} satisfying the following initial
\begin{equation}\label{eq3.2}
  u(x, a+)=\varphi(x), \quad x\in\left[0, 1\right]
\end{equation}
 and the boundary conditions: 
\begin{equation}\label{eq3.3}
\frac{\partial^\nu u(0, t)}{\partial x^\nu}=0; \quad \nu=\overline{0, -[\beta]}; \quad u(1,t)=0, \quad a\leq t\leq T
\end{equation}
where   $\varphi$ is given function, $[\beta]$ denotes the integer part of $\beta$,  $a \geq 0$.
\end{problem}

First, we will examine the form of the boundary condition \eqref{eq3.3} for different values of $\beta$.

Let $\beta\in (0,1)$. Then, the condition \eqref{eq3.3} takes the form
\begin{equation}\label{eq3.4}
u(0,t)=0, \quad  u(1,t)=0, \quad a \leq t \leq T.    
\end{equation}
 
For the values of $\beta\in (1,2)$, the condition \eqref{eq3.3} has the form 
\begin{equation}\label{eq3.5} u(1,t)=0, \quad a\leq t\leq T,    
\end{equation}
so we do not need any conditions on the line $x=0$.

Therefore, the condition \eqref{eq3.3} encompasses the conditions \eqref{eq3.4}, \eqref{eq3.5}.

\section{Spectral problem}

For studying problem \ref{problem4.1}  we use the method of separation variables, i.e. we will seek the solution of the equation \eqref{eq3.1} satisfying the boundary conditions \eqref{eq3.3} in the form 
$$u(x,t)=T(t)v(x),$$
where $T(t)$ and $v(x)$ are unknown functions. 

Then, we obtain the following spectral problem for $v(x)$:
\begin{equation}\label{eq4.1}
-\left( x^{\beta} v^{\prime}(x)\right)^{\prime}=\lambda v(x),\end{equation}

\begin{equation}\label{eq4.2}
v^{(\mu)}(0)=0, \quad \mu=\overline{0,-[\beta]}, \quad v(1)=0. \end{equation}

For proving the existence of the eigenvalues and eigenfunctions of the spectral problem $\{\eqref{eq4.1},\eqref{eq4.2}\}$ we consider the following operator 
$$
A v \equiv -\left( x^{\beta}v^{\prime}\right)^{\prime}
$$ with the domain $D(A)$ that consists of functions with the following properties: 

1) $v \in C[0,1];$

2) $x^\beta v^{\prime}(x) \in C[0,1]$ and  $(x^\beta v^{\prime}(x))^{\prime}\in AC[0,1]$,  
$Av \in L^2(0,1)$;

3) $v^{(\mu)}(0)=0, \quad \mu=\overline{0,-[\beta]}, \quad v(1)=0.$

 We denote by $\dot{W}_{2, \beta}^1$ the closure of the set $D(A)$ with the following norm
$$
\|v\|_{W_{2, \beta}^1}^2=\int_0^1\left[v^2(x)+x^\beta v^{{\prime}2}(x)\right] dx.
$$

For proving the existence of the eigenfunctions and eigenvalues of the spectral problem $\{\eqref{eq4.1},\eqref{eq4.2}\}$ we use the variational method \cite{M64}. To do this we show the positive definite of the operator $A$.  

\begin{theorem}\label{th4.1}
   Let  $0 < \beta < 2$ and let $\beta \ne 1$. Then the operator $A$ is positive definite in $L^2(0,1)$.
\end{theorem}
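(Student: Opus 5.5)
We must show that there is $\gamma>0$ with $(Av,v)\ge \gamma\|v\|^2_{L^2(0,1)}$ for all $v\in D(A)$; symmetry of $A$ will follow along the way. The plan is to start from the integration by parts identity \eqref{eq2.3} and show that the boundary term $-v\,x^\beta v'\big|_0^1$ vanishes for every $v\in D(A)$. Once that is done,
\[
(Av,v)=\int_0^1 x^\beta (v'(x))^2\,dx ,
\]
and it remains to bound this from below by a multiple of $\int_0^1 v^2\,dx$ using a weighted Hardy/Poincaré-type inequality that relies only on $v(1)=0$.

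\textbf{Boundary term.} At $x=1$ the term vanishes because $v(1)=0$. At $x=0$ we split into two cases.
\begin{itemize}
\item For $0<\beta<1$ the domain includes $v(0)=0$, and $x^\beta v'\in C[0,1]$ is bounded near $0$. Hence $v(x)\,x^\beta v'(x)\to 0$.
\item For $1<\beta<2$ there is no condition at $0$, so we use the argument of Section 3. Since $x^\beta v'\in C[0,1]$, the limit $b=\lim_{x\to0}x^\beta v'$ exists. If $b\neq0$, then $|v'|\ge c\,x^{-\beta}$ near $0$, which is not integrable for $\beta>1$; this contradicts $v\in C[0,1]$. So $b=0$ and, $v$ being bounded, the product tends to $0$.
\end{itemize}
The same computation applied to $(Av,w)$ gives symmetry.

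\textbf{Lower bound.} For $v\in D(A)$ write $v(x)=-\int_x^1 v'(s)\,ds$. This is legitimate since $v$ is continuous on $[0,1]$ and absolutely continuous on $[\varepsilon,1]$, so one lets $\varepsilon\to0$. By Cauchy–Schwarz,
\[
v^2(x)\le \int_x^1 s^{-\beta}\,ds\int_x^1 s^\beta v'^2\,ds .
\]

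For $0<\beta<1$, $\int_0^1 s^{-\beta}ds=\frac1{1-\beta}$. Integrating in $x$ gives
\[
\|v\|^2\le \frac1{1-\beta}\int_0^1x^\beta v'^2\,dx .
\]

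For $1<\beta<2$ the factor $\int_x^1 s^{-\beta}ds\le \frac{x^{1-\beta}}{\beta-1}$ blows up at $0$, but it is integrable in $x$ precisely because $\beta<2$. Hence
\[
\|v\|^2\le \int_0^1\frac{x^{1-\beta}}{\beta-1}\,dx\int_0^1 s^\beta v'^2\,ds=\frac{1}{(\beta-1)(2-\beta)}\int_0^1x^\beta v'^2\,dx .
\]

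Thus $(Av,v)\ge\gamma\|v\|^2$ with $\gamma=1-\beta$ or $\gamma=(\beta-1)(2-\beta)$, respectively. The same estimate shows that the energy norm $\int x^\beta v'^2$ is equivalent to the $\dot W^1_{2,\beta}$ norm.

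\textbf{Main obstacle.} I expect the delicate step to be the case $1<\beta<2$, where no boundary condition at $0$ is available. There one must verify carefully that the boundary term at $0$ vanishes (existence of the limit of $x^\beta v'$ and boundedness of $v$) and that the Poincaré constant stays finite. The latter is exactly where $\beta<2$ is used; as $\beta\to2$ the constant degenerates.
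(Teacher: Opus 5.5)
Your proof is correct and follows essentially the paper's route: you integrate by parts, kill the boundary term at $0$ via $v(0)=0$ for $0<\beta<1$ and via the Section 3 argument for $1<\beta<2$, and then apply Cauchy--Schwarz to $v(x)=-\int_x^1 v'(s)\,ds$. The only difference is that you bound $\int_x^1 s^{-\beta}\,ds$ crudely and separately in each case, whereas the paper evaluates $\int_0^1\int_x^1 t^{-\beta}\,dt\,dx=\int_0^1 t^{1-\beta}\,dt=\frac{1}{2-\beta}$ exactly; this gives the uniform constant $\gamma=2-\beta$ and shows that the degeneration of your constants $1-\beta$ and $(\beta-1)(2-\beta)$ as $\beta\to1$ is an artifact of the cruder bound.
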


\begin{proof}
    First, we will prove that the operator $A$ is symmetric. Since the domain of the definition of the operator contains finite functions on $(0,1)$, the domain of the operator $D(A)$ is dense in $L^2(0,1)$. 
    
    Now, for any functions $v, \omega \in D(A)$, we consider the following scalar product 
$$
(A v, \omega)=\int_0^1 \left(x^{\beta} v^{\prime}\right)^{\prime} \omega d x.
$$

Applying the rule of integration by parts, and considering the conditions \eqref{eq4.2} from the last, we obtain 
\begin{equation}
(A v, \omega)=\int_0^1 x^{\beta} v^{\prime} \omega^{\prime} dx.
\end{equation}

Hence, by applying integration by parts once again and taking \eqref{eq4.2} into account, we get
\begin{equation}\label{eq4.4}
(A v, \omega)=\int_0^1 x^{\beta}  \omega^{\prime}   v^{\prime} dx=(v,A\omega).    
\end{equation}

Thus, we obtain the equality $(A v, \omega) = (v, A\omega)$, from which it follows that the operator $A$ is symmetric.

Now, we shall show that for any  $v \in D(A)$, there exists a positive constant $\gamma$, and the operator $A$ satisfies the following positive definiteness inequality 
\begin{equation}\label{eq4.5}
(A v, v) \geqslant \gamma \|v\|_{L^2(0,1)}^2.
\end{equation}

From the equality   (\ref{eq4.4}), in case $w=v$, we obtain  
\begin{equation}\label{eq4.6}
(A v, v) = \int_0^1 x^{\beta} v^{\prime 2}(x) dx.
\end{equation}

Since $v(1)=0$, we can write the following equality
$$v\left(x \right)=-\int\limits_{x}^{1}{{v}^{\prime}\left( t \right)dt}.$$

 Using this equality, we have
$$\int\limits_{0}^{1}v^{2}\left( x \right)dx=\int\limits_{0}^{1}\left(\int\limits_{x}^{1}{{{v}^{\prime}}\left( t \right)dt}\right)^{2}dx=\int\limits_{0}^{1}\left(\int\limits_{x}^{1}{\frac{{{1}}}{\sqrt{t^\beta}}\sqrt{t^{\beta}}{{v}^{\prime}}\left( t \right)dt}\right)^{2}dx.$$

Hence, applying the Cauchy-Schwarz inequality, we obtain
$$\int\limits_{0}^{1}{{{v}^{2}}}\left( x \right)dx\le \int\limits_{0}^{1}{\left( \int\limits_{x}^{1}{{{t^{-\beta}}}dt} \right)\left( \int\limits_{x}^{1}{{t^\beta}{{\left[ {{v}^{\prime}}\left( t \right) \right]}^{2}}dt} \right)}\,dx\le$$
\begin{equation}\label{2.6}
\le \int\limits_{0}^{1}{\left( \int\limits_{x}^{1}{{{t^{-\beta}dt}}} \right)}\,dx\left( \int\limits_{0}^{1}{{t^\beta}{{\left[ {{v}^{\prime}}\left( t \right) \right]}^{2}}dt} \right). \end{equation}

From the last, we have 
$$
\int\limits_0^1 v^2(x) d x \leqslant \int\limits_0^1 x^\beta v^{\prime 2}(x) d x \int\limits_0^1 \int\limits_x^1 {t^{-\beta}} dt dx.
$$

Since $\beta \ne 1$, we have 
$$
\int\limits_0^1 \int\limits_x^1 {t^{-\beta}} dt dx=\frac{1}{1-\beta}-\frac{1}{(1-\beta)(2-\beta)}=\frac{1}{2-\beta}>0.
$$

Then, introducing 
 $$\gamma={2-\beta}$$
 we come the proof of the inequality \eqref{eq4.5}.

Thus, the proof of Theorem \ref{th4.1} is complete.
\end{proof}

Now, we consider the Friedrichs extension of the operator $A$, and denote this extension by the same notation, $A$.

\begin{theorem}
    Let  $0<\beta<2$ and $\beta \ne 1$.  Then, the operator $A$ has a discrete spectrum. 
\end{theorem}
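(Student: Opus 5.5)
The plan is to use the standard criterion for self-adjoint operators that are bounded below (see \cite{M64}). Let $A$ be the Friedrichs extension of a positive definite operator. Its spectrum is discrete if and only if every set bounded in the energy norm is precompact in $L^2(0,1)$, that is, if the embedding of the energy space into $L^2(0,1)$ is compact. By Theorem \ref{th4.1} and the identity \eqref{eq4.6}, the energy norm satisfies $[v,v]_A=(Av,v)=\int_0^1 x^\beta v'^2\,dx$. By \eqref{eq4.5} this norm is equivalent to the norm of $\dot W^1_{2,\beta}$, so the energy space is exactly $\dot W^1_{2,\beta}$. It therefore suffices to prove that the embedding $\dot W^1_{2,\beta}\hookrightarrow L^2(0,1)$ is compact for $0<\beta<2$, $\beta\neq1$.

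To prove compactness, take a set $M$ with $\int_0^1 x^\beta v'^2\,dx\le C$ for all $v\in M$. It is enough to argue for $v\in D(A)$ and pass to the closure. I will use the Kolmogorov--Riesz (Fr\'echet) criterion, splitting $(0,1)$ into $(0,\delta)$ and $(\delta,1)$. On $[\delta,1]$ the weight satisfies $x^\beta\ge\delta^\beta$, so $M$ is bounded in $W^1_2(\delta,1)$. By Rellich's theorem (or directly by the equicontinuity $|v(x)-v(y)|\le \delta^{-\beta/2}\sqrt{C}\,|x-y|^{1/2}$ together with Arzel\`a--Ascoli), $M$ restricted to $[\delta,1]$ is precompact in $C[\delta,1]$, and hence in $L^2(\delta,1)$.

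The main step is a uniform bound on the tail near the degenerate endpoint: I must show $\sup_{v\in M}\int_0^\delta v^2\,dx\to0$ as $\delta\to0$. I will repeat the Cauchy--Schwarz computation from the proof of Theorem \ref{th4.1}, using $v(x)=-\int_x^1 v'(t)\,dt$:
\[
v^2(x)\le \int_x^1 t^{-\beta}\,dt\int_0^1 t^\beta v'^2\,dt\le C\,\frac{x^{1-\beta}-1}{1-\beta}.
\]
For $0<\beta<1$ the right-hand side is bounded, so $\int_0^\delta v^2\,dx\le C'\delta$. For $1<\beta<2$ it is bounded by $C x^{1-\beta}/(\beta-1)$, and since $1-\beta>-1$ this is integrable, giving $\int_0^\delta v^2\,dx\le C''\delta^{2-\beta}$. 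In both cases the bound tends to $0$ uniformly on $M$, because $\beta<2$. This is exactly where the hypothesis $\beta<2$ enters, and I expect it to be the delicate point: at $\beta=2$ the bound fails and compactness is lost.

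To finish, combine the two pieces into a finite $\varepsilon$-net. Choose $\delta$ so that the tail is at most $\varepsilon/2$ in $L^2(0,\delta)$, then take a finite $\varepsilon/2$-net of $M$ in $L^2(\delta,1)$. Together these give a finite $\varepsilon$-net of $M$ in $L^2(0,1)$, so $M$ is precompact. By the cited theorem the Friedrichs extension $A$ then has a discrete spectrum: a sequence of eigenvalues $0<\gamma\le\lambda_1\le\lambda_2\le\dots\to\infty$ with an orthonormal basis of eigenfunctions in $L^2(0,1)$. Equivalently, the compactness shows that $A^{-1}$ is a compact self-adjoint operator on $L^2(0,1)$.
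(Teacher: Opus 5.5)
Your proof is correct and uses the same skeleton as the paper's. Both identify the energy space $H_A$ with $\dot W^1_{2,\beta}$ via \eqref{eq4.5}. Both then invoke Mikhlin's criterion (Theorem 3, \S 40 of \cite{M64}): if energy-bounded sets are compact in $L^2(0,1)$, the spectrum is discrete.

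The difference is how that compactness is obtained. The paper cites Kondrashov's embedding theorems for weighted classes \cite{K62}. These give more than is needed: compactness in $C[0,1]$ for $0<\beta<1$, and in $L^q$ with $q<2/(\beta-1)$ for $1<\beta<2$. The paper reuses this later, for instance to get $Au\in C[0,1]$ in Theorem \ref{th7.1}.

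You instead prove the $L^2$ compactness by hand. Applying the Cauchy--Schwarz estimate of Theorem \ref{th4.1} pointwise gives a tail bound near $x=0$, uniform over the set, of order $\delta$ for $\beta<1$ and $\delta^{2-\beta}$ for $1<\beta<2$. Away from the degenerate endpoint the weight is bounded below, so Arzel\`a--Ascoli applies. What your route buys:
\begin{itemize}
\item It is self-contained and quantitative.
\item It makes explicit that the hypothesis $\beta<2$ is exactly the integrability of $x^{1-\beta}$ at $0$.
\item It shows $\beta\neq1$ plays no role in this step, since $\ln(1/x)$ is also integrable.
\item Applying the same estimate to $\int_x^y$ instead of $\int_x^1$ gives, for $0<\beta<1$, a uniform H\"older bound of exponent $(1-\beta)/2$ on all of $[0,1]$. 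This recovers the paper's stronger item (a).
\end{itemize}

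One small slip: $\int_x^1 t^{-\beta}\,dt=\frac{1-x^{1-\beta}}{1-\beta}$, not $\frac{x^{1-\beta}-1}{1-\beta}$, which is negative for every $\beta\neq1$. Your case-by-case bounds afterwards are the correct ones, so nothing downstream is affected.
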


\begin{proof}
    Since, the operator $A$ is positive definite, we introduce the energetic space $H_A$ of the operator $A$ with the following norm
$${{\left\| v \right\|}^2_{{{H}_{A}}}}=\int\limits_{0}^{1}{{x^\beta{{{{v}^{\prime 2}}\left( x \right) }}}dx}.$$

It is easy to show that $H_A$, as a set of functions, coincides with the space
$\dot{W}_{2, \beta}^{1}(0,1)$. Moreover, the norms of these two spaces are equivalent.

Let $M$ be a set of functions $v$ for which the  norm in the energetic space $H_A$ are bounded, i.e ${{\left\| v \right\|}_{{{H}_{A}}}} \leqslant c$, where $c$ is a finite constant number.

Then, from 
\eqref{eq4.4}  and  \eqref{eq4.5} it follows that 
$$
\|v\|_{W_{2, \beta}^2}^1 \equiv \int_0^1\left[v^2(x)+x^\beta v^{\prime 2}(x)\right] d x \leqslant \text { const.}
$$

Then, by the Kondrashov embedding theorems for weighted classes \cite{K62}, the set $M$ is compact in the spaces into which it is embedded, specifically:

a) in the spaces of continuous functions if $0<\beta<1$;

b) in the space $L^q$ if $\beta>1$, where $q \in \mathbb{R}$ such that $q<2 /(\beta-1)$.

 These statements show that the set $M$  is compact in the space  $L^2(0,1)$. 
Then, based on Theorem 3 (\S 40) in \cite{M64}, we conclude that the spectrum of the operator $A$ is discrete, i.e.  the system of eigenfunctions of the operator $A$ is a complete orthonormal system in $L^2(0,1)$ and orthogonal in $H_A$:
\begin{equation}\label{eq4.12}
 \int_0^1 v_i(x) v_j(x) d x=\left\{\begin{array}{lll}
1, & \text { if } & i=j ; \\
0, & \text { if } & i \neq j,
\end{array}\right. \\
\end{equation}
\begin{equation}\label{eq4.13}
 \int_0^1 x^\beta v_i^{\prime}(x) v_j^{\prime}(x) d x= \begin{cases}\lambda_i, \text { if } & i=j, \\
0, \text { if } & i \neq j .\end{cases}
\end{equation}
where $\left\{v_n\right\}_{n=1}^{+\infty}$  and  $\left\{\lambda_n\right\}_{n=1}^{+\infty}$  are the eigenfunctions and eigenvalues of the spectral problem  \{\eqref{eq4.1}, \eqref{eq4.2}\}, respectively.
\end{proof}

\section {The order of Fourier coefficients}
In this section, we establish the convergence of some series that will be used throughout the paper.

Let $\left\{v_n\right\}_{n=1}^{+\infty}$  and  $\left\{\lambda_n\right\}_{n=1}^{+\infty}$  be the eigenfunctions and eigenvalues of the spectral problem  \{\eqref{eq4.1}, \eqref{eq4.2}\}.

\begin{lemma}\label{Lemma3}
    Let  $f \in \dot{W}_{2, \beta}^{1}(0,1)$. Then, the following inequality holds:
\begin{equation}\label{eq5.1}
\sum_{n=1}^{+\infty} \lambda_n f_n^2 \leqslant \int_0^1 x^\beta \left[f^{\prime}(x)\right]^2 d x\end{equation}
from which follows the convergence of the series on the left-hand side of \eqref{eq5.1}, where 
$f_n=\int\limits_0^{1} f(x)v_n(x) dx$.
\end{lemma}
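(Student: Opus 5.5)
This is a Bessel-type inequality in the energetic space $H_A$. The plan is to use the orthogonality relations \eqref{eq4.12}--\eqref{eq4.13}, which show that $\{v_n/\sqrt{\lambda_n}\}$ is an orthonormal system in $H_A$ with inner product $[u,w]_A=\int_0^1 x^\beta u'w'\,dx$. First observe that every $\lambda_n>0$, since Theorem \ref{th4.1} gives $\lambda_n=(Av_n,v_n)\ge \gamma>0$. Then
$$e_n:=\frac{v_n}{\sqrt{\lambda_n}}$$
satisfies $[e_i,e_j]_A=\delta_{ij}$. By the standard Bessel inequality in the Hilbert space $H_A=\dot{W}_{2,\beta}^1(0,1)$, for every $f\in H_A$ we get $\sum_n [f,e_n]_A^2\le \|f\|_{H_A}^2=\int_0^1 x^\beta f'^2\,dx$.

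It remains to identify $[f,e_n]_A$ with $\sqrt{\lambda_n}\,f_n$. For $f\in D(A)$ this follows from integration by parts, exactly as in the proof of Theorem \ref{th4.1}:
$$[f,v_n]_A=\int_0^1 x^\beta f'v_n'\,dx=(f,Av_n)=\lambda_n (f,v_n)=\lambda_n f_n .$$
The boundary terms vanish because $f(1)=v_n(1)=0$. At $x=0$ they vanish by the analysis of Section 3: for $0<\beta<1$ both functions vanish at $0$, and for $1<\beta<2$ we have $x^\beta v_n'(x)\to 0$ while $f$ is bounded. Hence $[f,e_n]_A=\sqrt{\lambda_n}f_n$, and so $[f,e_n]_A^2=\lambda_n f_n^2$, which is \eqref{eq5.1} for $f\in D(A)$.

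To pass to general $f\in\dot{W}_{2,\beta}^1(0,1)$, the closure of $D(A)$, take $f^{(k)}\in D(A)$ with $f^{(k)}\to f$ in $H_A$. By \eqref{eq4.5}, and since the norms are equivalent, this also gives convergence in $L^2(0,1)$. Both sides of the identity $[f^{(k)},v_n]_A=\lambda_n (f^{(k)},v_n)$ are continuous in $f$ with respect to these norms, so the identity persists in the limit. The Bessel inequality then holds directly for $f$. Alternatively, one can apply it to each partial sum $\sum_{n\le N}\lambda_n (f^{(k)}_n)^2\le\|f^{(k)}\|_{H_A}^2$, let $k\to\infty$ for fixed $N$, and then let $N\to\infty$.

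The main obstacle is the vanishing of the boundary term at $x=0$ in the integration-by-parts identity, since $v_n$ is a priori only an element of the Friedrichs-extension domain. I would address this by verifying that the eigenfunctions lie in $D(A)$ (or a suitable local regularity class), so that the Section 3 argument applies. Failing that, I would use the defining property of the Friedrichs extension: $[u,v]_A=(u,Av)$ for all $u\in H_A$ and $v\in D(A_F)$. This gives the identity directly for all $f\in H_A$, with no boundary analysis and no approximation step needed.
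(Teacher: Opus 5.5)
Your proposal is correct and is essentially the paper's argument: the paper expands $\int_0^1 x^\beta\bigl(f'-\sum_{i=1}^n f_i v_i'\bigr)^2\,dx\ge 0$ using \eqref{eq4.13} and the identity $\int_0^1 x^\beta v_n' f'\,dx=\lambda_n f_n$. That expansion is just the proof of Bessel's inequality in $H_A$ for the orthonormal system $v_n/\sqrt{\lambda_n}$ that you cite. The only difference is that the paper takes the identity directly from the definition of generalized eigenfunctions, whereas you justify it; your Friedrichs-extension fallback, $[u,v]_A=(u,A_F v)$ for $u\in H_A$ and $v\in D(A_F)$, is exactly that definition and makes the boundary-term and density discussion unnecessary.
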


\begin{proof}

By the definition of generalized eigenfunctions for any    $f \in \dot{W}_{2, \beta}^{1}(0,1)$ the following equality holds
    \begin{equation}\label{eq5.2}
\int_0^1  x^\beta v_n^{\prime}(x) f^{\prime}(x) d x=\lambda_n f_n.
\end{equation}

Let us consider the following non-negative expression:
$$
\int_0^1  x^\beta\left(f^{\prime}(x)-\sum_{i=1}^n f_i v_i^{\prime}(x)\right)^2 d x \geqslant 0.
$$

From the last, we obtain
\begin{equation*}
    \begin{aligned}
& \int_0^1 x^\beta f^{\prime 2}(x) d x-2 \sum_{i=1}^n f_n \int_0^1 x^\beta f^{\prime }(x) v_n^{\prime}(x) d x+ \\
+ & \sum_{i=1}^n f_n^2 \int_0^1 x^\beta v_n^{\prime 2}(x) d x+2 \sum_{\substack{i, j=1 \\
i \ne j}}^n f_i f_j \int_0^1 x^\beta v_i^{\prime}(x) v_j^{\prime}(x) d x \geqslant 0.
\end{aligned}
\end{equation*}

Considering the last equality and \eqref{eq4.13}, \eqref{eq5.2}, from the last we have 
$$
\sum_{i=1}^{n} \lambda_i f_i^2 \leqslant \int_0^1 x^\beta \left[f^{\prime}(x)\right]^2 d x.
$$

Hence, passing to the limit as $n \to +\infty$ we come to the inequality \eqref{eq5.2}.
\end{proof}

Let us introduce the following notation 
$$A^mf=	\underbrace{A(A(...(A}_{m\,\, \text{times}} f)...)).
		$$

~ 

~

\begin{lemma}\label{lemma5.2}
Let the following conditions be given
\begin{enumerate}
    \item for all even $m$ numbers $f,$ $Af$, $\dots$, $A^{\frac{m}{2}}f\in \dot{W}_{2, \beta}^{1}(0,1)$;
    \item for all odd $m$ numbers $f,$ $Af$, $\dots$, $A^{\frac{m-1}{2}}f\in \dot{W}_{2, \beta}^{1}(0,1)$, $A^{\frac{m+1}{2}}f\in L^2(0,1)$.
\end{enumerate}     

Then, the following Bessel-type inequalities hold true:
\begin{equation}\label{eq5.3}
    \begin{aligned}
\sum_{n=1}^{+\infty} \lambda_n^{m+1} f_n^2 \leqslant\left\{\begin{array}{l}
\int_0^1 x^\beta\left[\left(A^{\frac{m}{2}} f\right)^{\prime}\right]^2 d x, \quad \text {for  even $m$, } \\
\int_0^1\left[A^{\frac{m+1}{2}} f\right]^2 d x, \quad  \text {for odd $m$}
\end{array}\right.
\end{aligned}
\end{equation}
\end{lemma}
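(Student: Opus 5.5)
The plan is to reduce Lemma \ref{lemma5.2} to Lemma \ref{Lemma3} and to Bessel's inequality in $L^2(0,1)$. The key identity is the transfer of the operator $A$ onto the coefficients:
\[
(A g)_n=\int_0^1 (Ag)(x)v_n(x)\,dx=\lambda_n g_n ,
\]
valid whenever $g\in\dot W^1_{2,\beta}(0,1)$ and $Ag\in L^2(0,1)$. I would derive it from the generalized eigenfunction relation \eqref{eq5.2}. For $g\in D(A)$ one integrates by parts twice, using symmetry \eqref{eq4.4} and the boundary conditions \eqref{eq4.2}. The boundary term at $x=0$ vanishes for $1<\beta<2$ by \eqref{eq2.5}, and for $0<\beta<1$ by $v(0)=0$. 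The identity then passes to the closure, since $Ag$ defines the functional $w\mapsto\int_0^1 x^\beta g'w'\,dx$.

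Iterating this identity under the hypotheses of the lemma gives, for each admissible $k$,
\[
(A^k f)_n=\lambda_n^k f_n .
\]
This uses that $f,Af,\dots,A^{k-1}f$ lie in $\dot W^1_{2,\beta}(0,1)$ and that $A^kf\in L^2(0,1)$.

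\emph{Even $m$.} Set $g=A^{m/2}f\in\dot W^1_{2,\beta}(0,1)$, so that $g_n=\lambda_n^{m/2}f_n$. Applying Lemma \ref{Lemma3} to $g$ gives
\[
\sum_{n}\lambda_n\lambda_n^{m}f_n^2=\sum_n\lambda_n g_n^2\le\int_0^1x^\beta\big[(A^{m/2}f)'\big]^2dx,
\]
which is the first line of \eqref{eq5.3}.

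\emph{Odd $m$.} Set $h=A^{(m+1)/2}f\in L^2(0,1)$. Since $A^{(m-1)/2}f\in\dot W^1_{2,\beta}(0,1)$ and its image under $A$ is in $L^2(0,1)$, the identity gives $h_n=\lambda_n^{(m+1)/2}f_n$. Bessel's inequality for the orthonormal system \eqref{eq4.12} then yields
\[
\sum_n\lambda_n^{m+1}f_n^2=\sum_n h_n^2\le\|h\|^2_{L^2(0,1)},
\]
which is the second line of \eqref{eq5.3}.

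The main obstacle is rigorously justifying $(Ag)_n=\lambda_n g_n$ for $g$ in the closure $\dot W^1_{2,\beta}(0,1)$ rather than in $D(A)$ itself. The issue is the boundary term $x^\beta g'(x)v_n(x)$ at $x=0$. I would handle it through the Friedrichs extension: the eigenfunctions $v_n$ lie in $D(A)$ of the extended self-adjoint operator, so $(Ag,v_n)=(g,Av_n)=\lambda_n(g,v_n)$ by self-adjointness. This requires interpreting the hypothesis ``$A^kf$'' as lying in the domain of the Friedrichs extension, and I would state this interpretation explicitly.
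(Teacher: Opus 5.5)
Your proposal is correct and follows the paper's own proof. The paper likewise gets $(Af,v_n)=\lambda_n f_n$ from \eqref{eq5.2} by integrating by parts and iterates it to $(A^{k}f,v_n)=\lambda_n^{k}f_n$, then applies Lemma \ref{Lemma3} to $A^{m/2}f$ for even $m$ and Bessel's inequality to $A^{(m+1)/2}f$ for odd $m$; your reading of $A^kf$ through the Friedrichs extension, which justifies $(Ag,v_n)=\lambda_n g_n$ on the closure, tightens a step the paper leaves implicit, and your exponent $\lambda_n^{m+1}$ in the odd case is correct where the paper's final display has the typo $\lambda_n^{m-1}$.
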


\begin{proof}
    Assume that $m$ is even natural number. Then, applying the rule of integration by parts twice, from \eqref{eq5.2} we have 
\begin{equation}\label{eq5.4}
    \begin{aligned}
\left(A f, v_n\right)=\lambda_n f_{n}=\lambda_n\left(f, v_n\right).
\end{aligned}
\end{equation}

Replacing  $f$ by $Af$ in \eqref{eq5.4}, we obtain 
    $$
    \left(A^2 f, v_n\right)=\lambda_n\left(A f, v_n\right)=\lambda_n^2 f_n.
    $$

Similarly, one can show that the following equality is valid:
\begin{equation}\label{eq5.5}
    \left(A^{\frac{m}{2}} f, v_n\right)=\lambda_n^{\frac{m}{2}} f_n.
\end{equation}

Equation \eqref{eq5.5} shows that $\lambda_n^{\frac{m}{2}} f_n$ is the Fourier coefficient of the function $A^{\frac{m}{2}} f$ for the system of the eigenfunctions of the spectral problem \{\eqref{eq4.1}, \eqref{eq4.2}\}.

Since, $A^{\frac{m}{2}} f \in \dot{W}_{2, \alpha}^{1}(0,1)$  by applying Lemma \ref{lemma5.2}, we have 
$$
\sum_{n=1}^{+\infty} \lambda_n \lambda_n^m f_n^2=\sum_{n=1}^{+\infty} \lambda_n^{m+1} f_n^2 \leqslant \int_0^1 x^\beta\left[\left(A^{\frac{m}{2}} f\right)^{\prime}\right]^2 dx.
$$

Thus, we have proved Lemma \ref{lemma5.2} for even 
$m$.

Now, we consider the case where 
$m$ is odd.

In this case, from \eqref{eq5.5}, we have 
$$
\left(A^{\frac{m+1}{2}} f, v_n\right)=\lambda_n^{\frac{m+1}{2}} f_n.
$$

Under the assumptions of Lemma 6.2, we have $A^{\frac{m+1}{2}} f \in L^2(0,1)$. Then, applying Bessel's inequality, we obtain 
$$
\sum_{n=1}^{+\infty}\left(A^{m+1} f, v_n\right)^2=\sum_{n=1}^{+\infty} \lambda_n^{m-1} f_n^2 \leqslant \int^1\left[A^{\frac{m+1}{2}} f\right]^2 d x. 
$$

The proof of Lemma \ref{lemma5.2} is complete. 
\end{proof}

\begin{lemma}\label{lemma5.3}
    Let $f \in C([a, T], \dot{W}_{2, \beta}^1(0,1))$, $A f \in C\left([a, T], L^2(0,1)\right)$. Then the following series
    $$
    \sum_{n=1}^{+\infty} \lambda_n^2 f_n^2(t)
    $$
    converges absolutely and uniformly in $[a,T]$.
\end{lemma}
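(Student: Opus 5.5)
The plan is to use the Fourier coefficient identity for $Af$ together with Bessel's inequality, and then obtain uniformity in $t$ from Dini's theorem.

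\textbf{Step 1 (pointwise bound).} For each fixed $t\in[a,T]$ we have $f(\cdot,t)\in \dot{W}_{2,\beta}^1(0,1)$ and $Af(\cdot,t)\in L^2(0,1)$. Relation \eqref{eq5.2}, with one more integration by parts as in \eqref{eq5.4}, gives
$$
\left(Af(\cdot,t),v_n\right)=\lambda_n f_n(t).
$$
In other words, $\lambda_n f_n(t)$ is the $n$-th Fourier coefficient of $Af(\cdot,t)$ with respect to the orthonormal system $\{v_n\}$. This is exactly the odd case $m=1$ of Lemma \ref{lemma5.2}. The system is complete by the discreteness theorem, so Parseval's equality holds:
$$
\sum_{n=1}^{+\infty}\lambda_n^2 f_n^2(t)=\int_0^1\left[Af(x,t)\right]^2dx .
$$
The terms are nonnegative, so pointwise absolute convergence is immediate. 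Moreover,
$$
\sum_{n}\lambda_n^2f_n^2(t)\le \max_{a\le t\le T}\|Af(\cdot,t)\|^2_{L^2(0,1)}<\infty .
$$

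\textbf{Step 2 (continuity of terms and of the sum).} Each term $\lambda_n^2f_n^2(t)$ is continuous on $[a,T]$, because $f_n(t)=(f(\cdot,t),v_n)$ and $f\in C([a,T];L^2(0,1))$; indeed, $\dot W^1_{2,\beta}$ embeds continuously into $L^2$ by \eqref{eq4.5}. The sum equals $\|Af(\cdot,t)\|^2_{L^2}$ by Parseval, and this is continuous in $t$ by the hypothesis $Af\in C([a,T],L^2(0,1))$.

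\textbf{Step 3 (uniformity).} The partial sums $S_N(t)=\sum_{n\le N}\lambda_n^2f_n^2(t)$ are continuous, increase monotonically in $N$, and converge pointwise on the compact interval $[a,T]$ to a continuous limit. Dini's theorem therefore gives uniform convergence. Absolute convergence is trivial since the terms are nonnegative.

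\textbf{Main obstacle.} The delicate point is justifying the equality in Step 2, rather than just the Bessel inequality. That equality requires completeness of $\{v_n\}$ in $L^2(0,1)$, which the paper asserts in the discreteness theorem. It also requires the identity $(Af,v_n)=\lambda_nf_n$ for $f$ in the domain of the Friedrichs extension, where the boundary terms at $x=0$ are handled via Section 3.

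If one wants to avoid Parseval, there is an alternative route to uniformity. Write the tail as $\sum_{n>N}(Af(\cdot,t),v_n)^2$ and use the uniform continuity of $t\mapsto Af(\cdot,t)$ in $L^2$. Cover $[a,T]$ by finitely many $\delta$-balls around points $t_j$, choose $N$ so that the tails at all $t_j$ are small, and control the remaining tail by $\|Af(\cdot,t)-Af(\cdot,t_j)\|^2$ via Bessel's inequality applied to the difference. This gives the uniform Cauchy criterion directly.
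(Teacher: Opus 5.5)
Your proposal is correct and follows the same route as the paper. Parseval's equality for $Af(\cdot,t)$, based on $(Af(\cdot,t),v_n)=\lambda_n f_n(t)$ and completeness of $\{v_n\}$, identifies the sum with $\|Af(\cdot,t)\|^2_{L^2(0,1)}$, and Dini's theorem then gives uniform convergence on $[a,T]$; you just spell out what the paper leaves implicit, namely continuity of each term, monotonicity of the partial sums, and continuity of the limit.
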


\begin{proof}
By employing Parseval's equality for the function $Af(x,t)$, we have 
$$
\sum_{n=1}^{+\infty} \lambda_n^2 f_n^2(t)=\int_0^1[A f(x, t)]^2 d x.
$$

Since $A f(x, t) \in C\left([a, T], L^2(0,1)\right)$ based on Dini's theorem, we have the proof of Lemma  \ref{lemma5.3}. 
\end{proof}

\begin{lemma}\label{lemma5.4}
    Let $f(x, t), A f(x, t) \in C([a, T], \dot{W}_{2, \beta}^1(0,1))$. Then the following series
    $$
    \sum_{n=1}^{+\infty} \lambda_n^3 f_n^2(t)
    $$
    converges absolutely and uniformly in $[a,T]$.
\end{lemma}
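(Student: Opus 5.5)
The plan is to mirror the proof of Lemma \ref{lemma5.3}, replacing Parseval's equality by the energy identity in $H_A$ applied to $Af$. For each fixed $t\in[a,T]$, the Fourier coefficients of $Af(\cdot,t)$ with respect to $\{v_n\}$ are $\lambda_n f_n(t)$. This is \eqref{eq5.4}, which follows from \eqref{eq5.2} because $f(\cdot,t)\in\dot{W}_{2,\beta}^1(0,1)$ and $Af(\cdot,t)\in L^2(0,1)$. Since $Af(\cdot,t)\in\dot W^1_{2,\beta}(0,1)$, Lemma \ref{Lemma3} applied to $Af$ gives
$$
\sum_{n=1}^{+\infty}\lambda_n^3 f_n^2(t)=\sum_{n=1}^{+\infty}\lambda_n\big(\lambda_n f_n(t)\big)^2\leqslant \int_0^1 x^\beta\big[(Af)'_x(x,t)\big]^2dx .
$$
This is the case $m=2$ of Lemma \ref{lemma5.2}. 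It yields pointwise convergence and a bound uniform in $t$.

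For uniform convergence via Dini's theorem, I will upgrade this inequality to an equality. Because $\{v_n\}$ is complete in $L^2$ and orthogonal in $H_A$ with $\|v_n\|_{H_A}^2=\lambda_n$ by \eqref{eq4.13}, the system $\{v_n/\sqrt{\lambda_n}\}$ is orthonormal in $H_A$. It is also complete there: if $g\in H_A$ is $H_A$-orthogonal to all $v_n$, then by \eqref{eq5.2} we get $\lambda_n g_n=0$ for every $n$, so $g=0$. Parseval's equality in $H_A$ then gives
$$
\sum_{n=1}^{+\infty}\lambda_n^3 f_n^2(t)=\int_0^1 x^\beta\big[(Af)'_x(x,t)\big]^2dx=\|Af(\cdot,t)\|_{H_A}^2 .
$$

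The terms $\lambda_n^3f_n^2(t)$ are continuous and nonnegative on $[a,T]$. Continuity follows because $f_n(t)=(f(\cdot,t),v_n)$ and $f\in C([a,T];L^2)$. The sum $\|Af(\cdot,t)\|_{H_A}^2$ is continuous in $t$, since $Af\in C([a,T],\dot W^1_{2,\beta}(0,1))$ and the $H_A$-norm is equivalent to the $\dot W^1_{2,\beta}$-norm. The partial sums are therefore monotone continuous functions converging pointwise to a continuous limit on the compact interval $[a,T]$, and Dini's theorem gives uniform convergence. Absolute convergence is immediate since all terms are nonnegative.

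The main obstacle is the Parseval step, namely the completeness of $\{v_n/\sqrt{\lambda_n}\}$ in $H_A$. If that identification proves delicate, the fallback is to avoid equality altogether. By the Cauchy criterion, it suffices to bound tails uniformly. Bessel's inequality in $H_A$ applied to $Af(\cdot,t)-Af(\cdot,s)$ controls differences of tails. Combined with the uniform continuity of $t\mapsto Af(\cdot,t)$ in $H_A$ and a finite $\varepsilon$-net of $[a,T]$, this gives $\sup_t\sum_{n>N}\lambda_n^3f_n^2(t)<\varepsilon$ for $N$ large.
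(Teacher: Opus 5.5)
Your proof is correct and follows the paper's route: you identify $\lambda_n f_n(t)$ as the Fourier coefficients of $Af(\cdot,t)$, apply Lemma~\ref{Lemma3} to $Af$, and conclude with Dini's theorem. Your one addition is to upgrade the inequality to Parseval's identity $\sum_n\lambda_n^3f_n^2(t)=\|Af(\cdot,t)\|_{H_A}^2$, using completeness of $\{v_n/\sqrt{\lambda_n}\}$ in $H_A$ (or, alternatively, your argument via Bessel's inequality on $Af(\cdot,t)-Af(\cdot,s)$ together with an $\varepsilon$-net); this supplies exactly what the paper's appeal to Dini leaves unjustified, since Dini requires the sum to be continuous in $t$, and being bounded by a continuous function does not give that.
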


\begin{proof}
Since  $(Af,v_n)=\lambda_nf_n(t)$ and $A f(x, t) \in C([a, T], \dot{W}_{2, \beta}^1(0,1))$, by applying Lemma \ref{Lemma3}, we have
$$
\sum_{n=1}^{+\infty} \lambda_n^3 f_n^2(t)\leq \int_0^1x^{\beta}[(A f)^{\prime}]^2 d x < +\infty.
$$

Since $A f(x, t) \in C([a, T], \dot{W}_{1, \beta}^2(0,1))$ based on Dini's theorem, we have the proof of Lemma  \ref{lemma5.4}.
\end{proof}

\section{The existence and uniqueness of the solution to Problem \ref{problem4.1}}

We prove the existence and uniqueness of the solution to Problem \ref{problem4.1}.

To do this, we consider two cases:

\textbf{Case 1.} $0<\beta<1$. 

\textbf{Case 2.} $1<\beta <2$.

\textbf{Case 1. Existence of the solution.}

First, we will define the classical solution to the problem. 

\begin{definition}
A function $u(x,t)$, such that $u(x,t) \in C(\bar{\Omega})$, \quad ${}^C \big( \left(t-a\right)^\theta \frac{\partial}{\partial t} \big)^\alpha  u(x,t), $ $ Au \in C(\Omega)$ satisfies the conditions (\ref{eq3.2}), (\ref{eq3.4}), is called a classical solution of problem \ref{problem4.1}.
\end{definition}

We will prove that the problem has a classical solution in Case 1.  

Let $\{v_k\}_{k=1}^{+\infty}$ and $\{\lambda_k\}_{k=1}^{+\infty}$ be the eigenfunctions and eigenvalues of spectral problem $\{\eqref{eq4.1},\eqref{eq4.2}\}$. We seek a solution to the problem in the following form
\begin{equation}\label{eq7.1}
u(x,t)=\sum\limits_{k=1}^{+\infty}u_k(t)v_{k}(x),
\end{equation}
where $(u,v_k)_{L^2(0,1)}=u_k(t)$ for $k\in \mathbb{N}.$

Substituting \eqref{eq7.1} into \eqref{eq3.1} 
and introducing $(f,v_k)_{L^2(0,1)}=f_k(t)$ from \eqref{eq3.1}, we obtain 
\begin{equation}\label{eq7.2}
^C\left(\big(t^{\theta}-a^\theta\big)\frac{d}{dt}\right)^{\alpha}u_k(t)+\lambda_k u_k(t)=f_k(t), \quad k \in \mathbb{N}, \quad t \in(0,T).
\end{equation}

Additionally, from the initial condition \eqref{eq4.2}, we have
\begin{equation}\label{eq7.3}
u_k(a^+)=\varphi_k,  \quad  k \in \mathbb{N},
\end{equation}
where $\varphi_k$ is the Fourier coefficient of the function $\varphi(x)$ with respect to the system of the eigenfunctions $\{v_k\}_{k=1}^{+\infty}$ can be defined as:
$$
\varphi_k=\int_0^1{\varphi(x)v_k(x)dx}.
$$

Using the result of Theorem  \ref{theo2.4}, the solution of the problem \big\{\eqref{eq7.2},\eqref{eq7.3}\big\} defined by 
\begin{equation}\label{eq7.4}
		\begin{gathered}
		u_k(t)=\varphi_k E_{\alpha, 1}\left[\lambda^*\big(t^p-a^p\big)^\alpha\right]+B_k(t),
		\end{gathered}
	\end{equation}
 where 
 \begin{multline*}
		\displaystyle{B_k(t)=\frac{1}{p^\alpha\Gamma(\alpha)}\int_{a}^{t}\big(t^p-\tau^p\big)^{\alpha-1}f_k(\tau)d(\tau^p)}\\
		\\
		\displaystyle{+\frac{\lambda^*}{p^\alpha}\int\limits_{a}^{t}\left(t^{p}-x^{p}\right)^{2\alpha-1}E_{\alpha, 2\alpha}\left[\lambda^*\left(t^{p}-x^{p}\right)^\alpha\right]f_k(x)d(x^p),}
	\end{multline*}
	
	where $p=1-\theta$ and $\lambda^*=-\frac{\lambda_k}{p^{\alpha}}$.

Substituting the obtained expression of $u_k(t)$ into \eqref{eq7.1}, we have 
\begin{equation}\label{eq7.5}
    u(x,t)=\sum\limits_{k=1}^{+\infty}\left\{\varphi_k E_{\alpha, 1}\left[\lambda^*\big(t^p-a^p\big)^\alpha\right]+B_k(t)\right\}v_{k}(x).
\end{equation}

\begin{theorem}\label{th7.1}
    Let the following conditions be fulfilled:
    \begin{enumerate}
        \item $0<\beta<1$;
        \item $\varphi, A\varphi \in \dot{W}_{2, \beta}^{1}(0,1);$
        \item $f, Af \in C ([a,T], \dot{W}_{2, \beta}^{1}(0,1))$.
    \end{enumerate}

        Then, the function $u(x,t)$ defined by  \eqref{eq7.1} will be the classical solution to Problem \ref{problem4.1}.

\begin{proof}
To prove Theorem \ref{th7.1} it is sufficient to show continuity of the functions $Au$ and ${ }^C\Big(\big(t-a)^\theta \frac{\partial}{\partial t} \Big)^\alpha u$ in the domain $\bar{\Omega}$.  If we show $Au \in C(\bar{\Omega})$, then from the equation \eqref{eq3.1} it follows the continuity of the function ${ }^C\Big(\big(t-a)^\theta \frac{\partial}{\partial t} \Big)^\alpha u$. For this aim, by formally differentiating \eqref{eq7.1}, we have 
\begin{equation}
    Au\equiv -\frac{\partial}{\partial x}\left(x^{\beta} \frac{\partial u(x,t)}{\partial x}\right)=\sum\limits_{k=1}^{+\infty} \lambda_k u_k(t)v_k(x). 
\end{equation}

Hence, considering \eqref{eq4.13}, we have 
\begin{equation}
    {{\left\| Au \right\|}^2_{{{H}_{A}}}}=\sum\limits_{k=1}^{+\infty} \lambda^3_k u^2_k(t).
\end{equation}

Taking the Lemma \ref{lemmacha} into account, the solution of the boundary value problem can be also written as 
\begin{equation*}
    \begin{aligned}
     u(x, t)=&\sum_{k=1}^{+\infty}\left[\varphi_k E_{\alpha, 1}\left(\lambda^*\left(t^p-a^p\right)^\alpha\right)\right.\\ +&\left.\frac{1}{p^\alpha} \int_a^t\left(t^p-\tau^p\right)^{\alpha-1} E_{\alpha, \alpha}\left[\lambda^* \left(t^p-\tau^p\right)^\alpha\right] f_k(\tau) d \tau^p \right] v_k(x),    
    \end{aligned}
\end{equation*}   where $\left({\lambda^*}={-\frac{\lambda_k}{p^\alpha}}\right)$

Now taking ${H}_A$-norm yields after integrating by parts
\begin{equation*}
    \begin{aligned}
 \|Au(\cdot, t)\|_{H_A}^2 \leqslant &  \sum_{k=1}^{+\infty}\lambda^3_k\left|\varphi_k\right|^2\left|E_{\alpha, 1}\left(\lambda^*\left(t^p-a^p\right)^\alpha\right)\right|^2\\ + &  \sum_{k=1}^{+\infty}\lambda^3_k \big|f_k(a)\big|^2\Big|(t^p-a^p)^\alpha E_{\alpha, \alpha+1}\big[\lambda^*(t^p-a^p)^\alpha\big]\Big|^2 \\+&
\sum_{k=1}^{+\infty} \frac{\lambda^3_k}{p^{2 \alpha}} \Big(\int_a^t\left|\left(t^p-\tau^p\right)^{\alpha} E_{\alpha, \alpha+1}\left[\lambda^*\left(t^p-\tau^p\right)^\alpha\right] f^{\prime}_k(\tau)\right| d \tau^p\Big) ^2.          
    \end{aligned}
\end{equation*}

 Hence, considering Lemma \ref{lem: 2.1} and applying Cauchy-Schwarz inequality, we obtain
\begin{equation*}
\begin{aligned}
  \|Au(\cdot, t)\|_{H_A}^2  & \leqslant   \sum_{k=1}^{+\infty} {\lambda^3_k}{}\left|\varphi_k\right|^2\left(\frac{M p^\alpha}{\left.p^\alpha+\lambda_k \mid t^p-a^p\right)^\alpha}\right)^2+\sum_{k=1}^{\infty} \frac{\lambda^3_k}{p^{2 \alpha}}  \big|f_k ( a)\big|^2\left(\frac{M p^\alpha\left|t^p-a^p\right|^\alpha}{p^\alpha+\lambda_k\left|t^p-a^p\right|^\alpha}\right)^2+ \\
& +\sum_{k=1}^{\infty} \frac{\lambda^3_k}{p^\alpha} \int_a^t\left(\frac{M p^\alpha\left|t^p-\tau^p\right|^\alpha}{p^\alpha+\lambda_k\left|t^p-\tau^p\right|^\alpha}\right)^2 d \tau \int_a^t\left|f_k^{\prime}(\tau)\right|^2 d \tau \leqslant \\
& \leqslant M_1 \sum_{k=1}^{\infty} \lambda_k^2\left|\varphi_k\right|^2\left(\frac{\sqrt{\lambda_k} p^\alpha}{p^\alpha+\lambda_k\left|t^p-a^p\right|^\alpha}\right)^2+\sum_{k=1}^{\infty}\lambda_k^2\left|f_k(a)\right|^2\left(\frac{\sqrt{\lambda_k}\left(t^p-a^p\right)^\alpha}{p^\alpha+\lambda_k\left(t^p-a^p\right)^\alpha}\right)^2+ \\
& +M_3 \sum_{k=1}^{\infty} \int_a^t\left(\frac{p^\alpha \sqrt{\lambda_k}\left|t^p-t^p\right|^\alpha}{p^\alpha+\lambda_k\left|t^p-\tau^p\right|^\alpha}\right)^2 d \tau \cdot \lambda_k^2\int_a^T\left|f_k^{\prime}(\tau)\right|^2 d \tau^p \\
& \leqslant M_1 \sum_{k=1}^{\infty}\lambda_k^2\left|\varphi_k\right|^2+M_2 \sum_{k=1}^{\infty}\lambda_k^2\left|f_k(a)\right|^2+M_3 \sum_{k=1}^{\infty}\lambda_k^2 \int_a^T\left|f_k^{\prime}(\tau)\right|^2 d \tau^p.
\end{aligned}
\end{equation*}

We have also used the inequality $(a+b+c)^2 \leq 3(a^2+b^2+c^2)$  to obtain the following estimation 
\begin{equation}
    \label{eq7.7}
{{\left\| Au \right\|}^2_{{{H}_{A}}}} \leq    C_1\sum\limits_{k=1}^{\infty} \lambda^2_k \varphi_k^2+C_2\sum_{k=1}^{\infty}\lambda_k^2\left|f_k(a)\right|^2+C_3 \sum_{k=1}^{\infty}\lambda_k^2 \int_a^T\left|f_k^{\prime}(\tau)\right|^2 d \tau^p,
\end{equation}
where $C_1$, $C_2$
and $C_3$ are constants and do not depend on $k$. 

The convergence of the two series follows from the conditions of Theorem \ref{th7.1} and Lemma \ref{lemma5.2}. The convergence of the third series follows from the conditions of Theorem \ref{th7.1} and Lemma \ref{lemma5.2}, with the subsequent application of the theorem on term-by-term integration of uniformly converging series.

   Thus, we have obtained 
   \begin{equation}
    {{\left\| Au \right\|}^2_{{{H}_{A}}}}\leq const,
\end{equation}
from which by Kondrashov embedding theorems for weighted classes \cite{K62}, we have $Au\in C[0,1]$. The continuity of $Au$ for $t$ follows from the uniform convergence of the series 
$$
\sum\limits_{k=1}^{+\infty} \lambda^3_k u_k^2(t)
$$
in $[0,T]$. Thus, $Au \in C(\bar{\Omega})$.

\end{proof}
    
\end{theorem}

\textbf{Case 2.} $1<\beta <2$. \textbf{Existence of the solution.}

In this case, first, we introduce the definition of the weak solution to Problem \ref{problem4.1}.
\begin{definition}
    A function $u(x,t)$ such that $$u(x,t)\in C([a,T],L^2(0,1))\cap C((a,T],\dot{W}_{2, \beta}^{1}(0,1)),  Au \in  C((a,T], L^2(0,1)),$$ is called a weak solution to the problem \ref{problem4.1}, if it satisfies the following relation 
    \begin{equation}\label{eq.7.10}
  {}^C \big( \left(t-a\right)^\theta \frac{\partial}{\partial t} \big)^\alpha(u,\omega)_{L^2(0,1)}-\left(x^{\beta/2}\frac{\partial u}{\partial x},x^{\beta/2}\frac{\partial \omega}{\partial x}\right)_{L^2(0,1)}= \left(f,\omega\right)_{L^2(0,1)}     
    \end{equation}
    for all $\omega \in \dot{W}_{2, \beta}^{1}(0,1)$,
    and satisfies the condition \eqref{eq3.2}.
\end{definition}

Let $\{v_k\}_{k=1}^{+\infty}$ and $\{\lambda_k\}_{k=1}^{+\infty}$ be the eigenfunctions and eigenvalues of the spectral problem $\{\eqref{eq4.1},\eqref{eq4.2}\}$. We seek a solution to the problem in the following form 

\begin{equation}\label{eq.7.11}
    u(x, t)=\sum_{k=1}^{+\infty} u_k(t) \vartheta_k(x)
\end{equation}

Substituting (\ref{eq.7.11}) into (\ref{eq.7.10}) and setting $\omega(x)$ as $v_k(x)$ in (\ref{eq.7.10}), and using the following equalities
$$
\begin{aligned}
& ^C\left((t-a)^\theta \frac{\partial}{\partial t}\right)^\alpha\left(u \cdot v_k\right)_{L^2(0,1)}= ^C\left((t-a)^\theta \frac{\partial}{\partial t}\right)^\alpha u_k(t), \\
& ^C\left(x^{\beta / 2} \frac{\partial u}{\partial x}, x^{\beta / 2} \frac{\partial v_k}{d x}\right)_{L^2(0,1)}=-\lambda_k\left(u, v_k\right)=-\lambda_k u_k(t),\\
&\left(f, v_k\right)_{L^2(0,1)}=f_k(t)
\end{aligned}
$$
and from (\ref{eq.7.10}),  we obtain
\begin{equation}\label{eq.7.12}
    ^C\left((t-a)^\theta \frac{d}{d t}\right)^\alpha u_k(t)+\lambda_k u_k(t)=f_k(t), ~ k \in \mathbb{N}, ~ t \in(a, T),
\end{equation}

Moreover, from the initial condition (\ref{eq3.2}), we have

\begin{equation}\label{eq.7.13}
    u_k\left(a^{+}\right)=\varphi_k,  ~ k \in \mathbb{N}
\end{equation}

The solution to the problem has the form (\ref{eq7.4}). Substituting this expression of $u_k(t)$ into (\ref{eq.7.11}), we have the following formal representation of the solution
\begin{equation}\label{eq.7.14}
    u(x, t)=\sum_{k=1}^{+\infty}\Big(\varphi_k E_{\alpha_1 1}\left[\lambda^*\left(t^p-a^p\right)^\alpha\right]+B_k(t)\Big) v_k(x)
\end{equation}

The following theorem is valid:

\begin{theorem}\label{teor.7.4}
    Let the following conditions be fulfilled:
    
    \begin{enumerate}
        \item $1<\beta<2$;
        \item  $\varphi \in L^2(0, 1)$
        \item $f(\cdot, a)\in L^2(0, 1)$ and $f \in C^1([a, T], L^2(0, 1))$
    \end{enumerate}
    then the unique weak solution to the Problem \ref{problem4.1} exists.
\end{theorem}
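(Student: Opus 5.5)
The plan is to verify directly that the formal series \eqref{eq.7.14} has the regularity demanded in the definition of the weak solution. Uniqueness will then follow from a separate energy/coefficient argument. I will use the representation of Lemma~\ref{lemmacha} rather than \eqref{eq7.4}:
$$u_k(t)=\varphi_k E_{\alpha,1}\big(-\tfrac{\lambda_k}{p^\alpha}(t^p-a^p)^\alpha\big)+\frac{1}{p^\alpha}\int_a^t (t^p-\tau^p)^{\alpha-1}E_{\alpha,\alpha}\big(-\tfrac{\lambda_k}{p^\alpha}(t^p-\tau^p)^\alpha\big)f_k(\tau)\,d(\tau^p).$$
Since $f\in C^1([a,T],L^2(0,1))$, I integrate by parts in $\tau$. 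The identity $\frac{d}{d\tau^p}\big[(t^p-\tau^p)^{\alpha}E_{\alpha,\alpha+1}(\cdot)\big]=-(t^p-\tau^p)^{\alpha-1}E_{\alpha,\alpha}(\cdot)$ splits the Duhamel term into
$$f_k(a)(t^p-a^p)^\alpha E_{\alpha,\alpha+1}(\cdot)+\int_a^t (t^p-\tau^p)^\alpha E_{\alpha,\alpha+1}(\cdot)f_k'(\tau)\,d\tau.$$
This is exactly the three-term splitting already used in the proof of Theorem~\ref{th7.1}.

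\textbf{Step 1: $u\in C([a,T],L^2)$.} By Lemma~\ref{lem: 2.1}, $|E_{\alpha,\beta}(-z)|\le M$ for $z\ge0$, so $|u_k(t)|\le C\big(|\varphi_k|+\max_{[a,T]}|f_k(\tau)|\big)$. Rather than bounding with $\max_\tau|f_k|$, I bound the Duhamel term by $|f_k(a)|+\int_a^T|f_k'|$. Parseval for $\varphi$ and $f(\cdot,a)$, together with $\sum_k\int_a^T|f_k'|^2\,d\tau=\int_a^T\|f_t\|^2\,d\tau$, then gives uniform convergence of $\sum_k u_k^2(t)$. Continuity in $t$ of each term plus this uniform convergence yields $u\in C([a,T],L^2(0,1))$ and $u(\cdot,a)=\varphi$.

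\textbf{Step 2: $u\in C((a,T],\dot W^1_{2,\beta})$ and $Au\in C((a,T],L^2)$.} By \eqref{eq4.13}, $\|u\|_{H_A}^2=\sum\lambda_k u_k^2$ and $\|Au\|^2_{L^2}=\sum\lambda_k^2u_k^2$, so I need $\lambda_k|u_k(t)|$ square-summable, uniformly on $[a+\delta,T]$. For the $\varphi$-term, Lemma~\ref{lem: 2.1} gives $\lambda_k|E_{\alpha,1}(\cdot)|\le Mp^\alpha (t^p-a^p)^{-\alpha}$, which is bounded for $t\ge a+\delta$. For the $f_k(a)$-term, $\lambda_k(t^p-a^p)^\alpha|E_{\alpha,\alpha+1}|\le Mp^\alpha$ holds uniformly. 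For the integral term, the same bound gives $\lambda_k\big|\int\cdots\big|\le Mp^\alpha\int_a^T|f_k'(\tau)|\,d\tau$, and Cauchy--Schwarz with $\sum_k\int_a^T|f_k'|^2<\infty$ closes it. The Weierstrass test on $[a+\delta,T]$ then gives the claimed continuity. The weak relation \eqref{eq.7.10} holds by construction: testing with $v_k$ reproduces \eqref{eq.7.12}, which $u_k$ solves by Theorem~\ref{theo2.4}. Density of $\mathrm{span}\{v_k\}$ in $\dot W^1_{2,\beta}$ then extends it to all $\omega$. Applying the Caputo hyper-Bessel operator termwise requires justification, and I would obtain it from the series for ${}^C(\cdots)^\alpha u_k=f_k-\lambda_k u_k$, which converges in $L^2$ for $t>a$ by the above.

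\textbf{Step 3: uniqueness.} Take the difference of two weak solutions, which has $f=0$ and $\varphi=0$. Testing with $\omega=v_k$ gives ${}^C(\cdots)^\alpha w_k+\lambda_k w_k=0$ with $w_k(a)=0$. By the uniqueness part of Theorem~\ref{theo2.4}, $w_k\equiv0$, and completeness of $\{v_k\}$ in $L^2$ gives $w\equiv0$.

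I expect the main obstacle to be Step 2 near $t=a$ and the termwise fractional differentiation. The $\varphi$-term only gives the blow-up rate $(t^p-a^p)^{-\alpha}$, which is why regularity is claimed only on $(a,T]$. Justifying that the fractional operator commutes with the $L^2$ inner product needs the $C^1$ assumption on $f$. I would handle it through the integration-by-parts representation above rather than through \eqref{eq7.4}.
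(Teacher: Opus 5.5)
Your proposal is correct and follows the paper's route: the representation of Lemma~\ref{lemmacha}, integration by parts into the $\varphi_k$, $f_k(a)$ and $f_k'$ terms, the Mittag-Leffler bound of Lemma~\ref{lem: 2.1} with Bessel's inequality, and uniqueness through the Fourier-coefficient argument of the uniqueness section. Your handling of the $\varphi_k$-term is more careful than the paper's: $\lambda_k\,|E_{\alpha,1}(-\lambda_k p^{-\alpha}(t^p-a^p)^\alpha)|$ is only bounded by $Mp^\alpha(t^p-a^p)^{-\alpha}$, not uniformly up to $t=a$ as the paper's final inequality implicitly assumes for $\varphi\in L^2(0,1)$, so your restriction to $[a+\delta,T]$ is exactly what the class $C((a,T],\cdot)$ in the definition requires.
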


\begin{proof}

Considering Lemma \ref{lemmacha}, we rewrite (\ref{eq.7.14}) in the following form
$$
\begin{aligned}
u(x, t)= & \sum_{k=1}^{+\infty}\Big[\varphi_k E_{\alpha, 1}\left(\lambda^*\left(t^p-a^p\right)^\alpha\right)+\\
 + &\frac{1}{p^\alpha} \int_a^t\left(t^p-\tau^p\right)^{\alpha-1} E_{\alpha, \alpha}\left[\lambda^*\left(t^p-\tau^p\right)^\alpha\right] f_k(\tau) d \tau^p\Big] v_k(x) .
\end{aligned}
$$

Since, $\left\{v_k(x)\right\}_{k=1}^{+\infty}$ is an orthonormal system in $L^2(0,1)$, and after applying the rule of integration by parts to the solution, we take $L^2(0, 1)$-norm

$$
\begin{aligned}
 \|A u\|_{L^2(0,1)}& =\sum_{k=0}^{\infty}\left|\lambda_k\right|^2\left|u_k(t)\right|^2 \leqslant \\
& \leqslant 3 \sum_{k=0}^{+\infty}\left|\varphi_k\right|^2 |\lambda_k|^2 \big|E_{\alpha, 1}\left(\lambda^*\left(t^p-a^p\right)^\alpha\right)\big|^2+\\&+3 \sum_{k=0}^{+\infty}\left|f_k(a)\right|^2 |\lambda_k|^2\Big|(t^p-a^p)^{\alpha} E_{\alpha, \alpha+1}\big(\lambda^*\left(t^p-a^p\right)^\alpha\big)\Big|^2+ \\
& +3 \sum_{k=0}^{\infty} \int_a^t \lambda_k^2 \Big|\left(t^p-e^p\right)^\alpha E_{\alpha, \alpha+1}\left[\lambda^*\left(t^p-\tau^p\right)^\alpha\right]\Big|^2\Big|f_k(\tau)\Big|^2 d \tau^p \leqslant \\
& \leqslant M_1 \sum_{k=0}^{\infty}\left|\varphi_k\right|^2 \Big|\frac{p^\alpha \lambda_k}{p^\alpha+\lambda_k\left|t^p-a^p\right|^\alpha}\Big|^2+M_2 \sum_{k=0}^{\infty}\left|f_k(a)\right|^2\left|\frac{p^\alpha \lambda_k\left|t^p-a^p\right|^\alpha}{p^\alpha+\lambda_k\left|t^p-a^p\right|^\alpha}\right|^2 + \\
& +M_3 \sum_{k=0}^{\infty} \int_a^t\left|f_k^{\prime}(\tau)\right|^2\left|\frac{p^\alpha \lambda_k\left|t^p-\tau^p\right|^\alpha}{p^\alpha+\lambda_k\left|t^p-\tau^p\right|^\alpha}\right|^2 d\tau^p \\
& \leqslant M_1 \sum_{k=0}^{\infty}\left|\varphi_k\right|^2+M_2 \sum_{k=0}^{\infty}\left|f_k(a)\right|^2+M_3 \sum_{k=1}^{\infty} \int_a^T\left|f_k^{\prime}(\tau)\right|^2 d \tau^p.,
\end{aligned}
$$
where $M_1$, $M_2$ and $M_3$ are constants.

If the conditions of the theorem \ref{teor.7.4} are fulfilled and considering Bessel's inequality, all three series converge uniformly.
The conditions for uniform converging a series $Au$ ensure that the other series $u$ or $^C ((t^\theta-a^\theta)\frac{\partial}{\partial t})^\alpha u$  also converge uniformly.

\end{proof}

\section{Uniqueness of solution}

To establish the uniqueness of the solution to the problem, we assume that there exist two distinct solutions, $u_1(x, t)$ and $u_2(x, t)$, that satisfy the problem defined by (\ref{eq3.1}), (\ref{eq3.2}) and (\ref{eq3.3}). Our goal is to demonstrate that $u(x, t) \equiv u_1(x, t) - u_2(x, t) \equiv 0$. For the solution $u(x,t)$, this leads to the boundary value problem:
\begin{equation*}
\begin{aligned}
\begin{cases}
  {}^C \big( \left(t-a\right)^\theta \frac{\partial}{\partial t} \big)^\alpha  u(x,t) 
- \frac{\partial}{\partial x} \left( x^\beta \frac{\partial u(x,t)}{\partial x} \right) = 0, ~ 
 a \leq t \leq T, \; ~ x \in [0,1], \\[1em]

u(x, a^+) = 0, ~ ~ ~
 x \in [0,1], \\[1em]

\frac{\partial^\nu u(0, t)}{\partial x^\nu} = 0, ~ ~ 
 \nu = \overline{0, -[\beta]}, ~ \;u(1, t) = 0,  ~ ~  a \leq t \leq T, 
\end{cases}
\end{aligned}
\end{equation*}

Let $u_k(t) = (u(x, t), v_k)$ be the Fourier coefficients of $u(x, t)$. Then, due to the self-adjointness of the operator $A$, we obtain:
\begin{equation*}
{}^C \big( \left(t-a\right)^\theta \frac{d}{dt} \big)^\alpha u_k(t) = \big({}^C \big( \left(t-a\right)^\theta \frac{d}{dt} \big)^\alpha  u, v_k\big) = -\big(Au, v_k\big) 
= -\big(u, Av_k\big)
= -\lambda_k u_k(t).
\end{equation*}

For $u_k(t)$, where $t > a$, the corresponding non-local boundary value problem is given by:
\begin{equation}\label{eq8.1}
{}^C \big( \left(t-a\right)^\theta \frac{d}{dt} \big)^\alpha u_k(t) + \lambda_k u_k(t) =0, \quad t > a, \quad u_k(a+) = 0, \quad \forall k \geq 1,
\end{equation}

The unique solution to the boundary value problem (\ref{eq8.1}) is given by 
$u_k(t)=0$ (see Formula (\ref{eq2.6})) and due to the completeness of the set of eigenvectors in $L^2(0, 1)$ we obtain $u(x, t)\equiv 0$.

\section*{Acknowledgements} 
The authors would like to express their gratitude to Prof. Erkinjon Karimov for his valuable suggestions that greatly enhanced the quality of the article

\end{document}